\newtheorem{theorem}{Theorem}
\newtheorem{proposition}{Proposition}
\newtheorem{lemma}{Lemma}
\newtheorem{remark}{Remark}
\begin{document}

\title{\bf Optimizing chemoradiotherapy to target multi-site metastatic disease and tumor growth}
\author{Hamidreza Badri,${}^{1}$ Ehsan Salari,${}^{2}$ Yoichi Watanabe,${}^{3}$ Kevin Leder${}^{1}$\\
{\small ${}^{1}$ Department of Industrial and Systems Engineering, University of Minnesota, Minneapolis, MN}\\{\small ${}^{2}$ Department of Industrial and Manufacturing Engineering, Wichita State University, Wichita, KS}\\{\small ${}^{3}$ Department of Radiation Oncology,  University of Minnesota, Minneapolis, MN}\\}

\date{\today} 

\normalsize{}
\maketitle

\begin{abstract}
The majority of cancer-related fatalities are due to metastatic disease \cite{gupta2006cancer}. In chemoradiotherapy (CRT), chemotherapeutic agents are administered along with radiation to {control} the primary tumor and systemic disease such as metastasis. This work introduces a mathematical model of CRT treatment scheduling to obtain optimal drug and radiation protocols with the objective of minimizing metastatic cancer-cell populations at multiple potential sites while maintaining a desired level of {control} on the primary tumor. A dynamic programming (DP) framework is used to determine the optimal radiotherapy fractionation regimen and the drug administration schedule. {\color{black}We design efficient DP data structures and use structural properties of the optimal solution to reduce the complexity of the resulting DP algorithm.} Also, we derive closed-form expressions for optimal chemotherapy schedules in special cases. {\color{black}The results suggest that if there is only an additive and spatial cooperation between the chemotherapeutic drug and radiation with no interaction between them, then radiation and drug administration schedules can be decoupled. In that case, regardless of the chemo- and radio-sensitivity parameters, the optimal radiotherapy schedule follows a hypo-fractionated scheme.} However, the structure of the optimal chemotherapy schedule depends on model parameters such as chemotherapy-induced cell-kill at primary and metastatic sites, as well as the ability of primary tumor cells to initiate successful metastasis at different body sites. {\color{black} In contrast, an interactive cooperation between the drug and radiation leads to optimal split-course concurrent CRT regimens. Additionally, under dynamic radio-sensitivity parameters due to the re-oxygenation effect during therapy, we observe that it is optimal to immediately start the chemotherapy and administer few large radiation fractions at the beginning of the therapy, while scheduling smaller fractions in later sessions.} We quantify the trade-off between the new and traditional objectives of minimizing the metastatic population size and maximizing the primary tumor control probability, respectively, for a cervical cancer case. The trade-off information indicates the potential for significant reduction in the metastatic population with minimal loss in the primary tumor control.

\bf{Keywords:} Chemoradiotherapy, Multi-site Metastasis, Dynamic Programming, Optimal Fractionation.
 
\end{abstract}
 
\section{Introduction}

Metastasis occurs when cancer cells spread from the primary tumor site to anatomically distant locations. The process of cancer metastasis consists of the following steps: tumor cells detach from the primary tumor site, invade a blood or lymphatic vessel, transit in the bloodstream, and extravasate from the blood vessels to distant sites where they proliferate to form new colonies. It is thought that cancer cells evolve a special set of traits that improve their ability to carry out this process \cite{chaffer2011perspective,farhidzadeh2014prediction,farhidzadeh2016signal}. Single or multi-site metastasis is a common occurrence in cancer patients. For instance, bone metastasis occurs in up to 70\% of patients with advanced prostrate or breast cancer \cite{roodman2004mechanisms}, and 20\% of patients with carcinoma of the uterine cervix develop single ($32\%$) or multi-site ($68\%$) metastasis at distant locations such as lymph nodes, lung, bone, or abdomen \cite{carlson1967distant}. The development of metastases significantly lessens the chances of successful therapy and represents a major cause of mortality in cancer patients. For instance, only 20\% of patients with breast cancer \cite{roodman2004mechanisms} and less than 1\% of patients with carcinoma of the uterine cervix \cite{carlson1967distant} are still alive five years after the discovery of metastasis.

Many researchers have used mathematical modeling and various optimization techniques to find clinically relevant optimal radiation delivery schedules. The mathematical building block for the majority of radiotherapy response modeling is the \emph{linear-quadratic} (LQ) model \cite{hall2006radiobiology}. This model states that following exposure to $d$ Gy of radiation, the surviving fraction of tumor cells is given by $e^{-\alpha d-\beta d^2}$, where $\alpha$ and $\beta$ are cell-specific parameters. In a conventional radiotherapy fractionation problem, the goal is to maximize the biological effect of radiation on the tumor while inflicting the least amount of {healthy-tissue} damage, which are evaluated using \emph{tumor control probability} (TCP) and \emph{normal-tissue complication probability} (NTCP), respectively \cite{wigg2001applied}. Some important questions concern the best total treatment size, the best way to divide the total dose into fractional doses, and the optimal inter-fraction interval times. An important result emerging from recent work is that for low values of {tumor $\left[\alpha/\beta\right]$}, a hypo-fractionated schedule, where large fraction sizes are delivered over a small number of treatment days, is optimal. However for large values of $\left[\alpha/\beta\right]$, a hyper-fractionated schedule, where small fraction sizes are delivered over a large number of treatment days, is optimal \cite{mizuta2012mathematical,badri2015optimal,badriglioma,bortfeld2013optimization}. 

For many inoperable tumors, radiotherapy alone is not enough to successfully control the primary tumor. For these tumors, chemoradiotherapy (CRT) is the standard of care, in which one or several chemotherapeutic agents are administered along with radiation. There are at least two basic mechanisms by which the combination of the two modalities may result in a therapeutic gain. First, chemotherapy may be effective against (occult) systemic disease, such as metastasis. This mechanism is called \emph{spatial cooperation}. Second, the two modalities act independently {(so-called \emph{additivity}) or dependently (so-called \emph{radio-sensitization}) to enhance tumor cell-kill at the primary site \cite{steel1979exploitable}}. Recently, Salari et al. \cite{salari2013mathematical} studied changes in optimal radiation fractionation regimens that result from adding chemotherapeutic agents to radiotherapy when maximizing TCP. More specifically, they incorporated additivity and radio-sensitization mechanisms into the radiation fractionation decision to identify corresponding changes in optimal radiation fractionation schemes. Their results suggest that chemotherapeutic agents with only an additive effect do not impact optimal radiation fractionation schemes; however, radio-sensitizers may change the optimal fractionation schemes and in particular may give rise to optimal nonstationary schemes.

In our very recent work \cite{badri2015minimizing}, we introduced a novel {biologically driven} objective function that takes into account metastatic risk instead of maximizing TCP. We observed that when designing optimal radiotherapy treatments with the goal of minimizing metastatic production, hypo-fractionation is preferable for many parameter choices, and {standard fractionation} is only preferable when the $\left[\alpha/\beta\right]$ value of the tumor is sufficiently large and the long-term metastasis risk (several years after therapy) is considered. Our prior results \cite{badri2015minimizing} demonstrated a proof of concept that radiation scheduling decisions have the potential to significantly impact metastatic risk. However, that work excluded several important features in the modeling of advanced carcinomas. 

First, we only considered single-site metastatic disease. However, it is well known that some tumors, e.g., lung cancers, metastasize quickly to multiple sites, whereas others, such as breast and prostate carcinomas, often take years to develop metastatic colonies and then only in a relatively limited number of sites \cite{hess2006metastatic}. Importantly, metastatic colonies at different anatomical sites may {respond} differently to anti-cancer drugs or have drastically different growth kinetics. 

Another shortcoming of the previous work \cite{badri2015minimizing} was that the mathmatical model ignored the growth of tumors at metastatic sites. In particular, \cite{badri2015minimizing} only looked at \textit{metastatic risk}, which was measured by the total number of successful metastatic cells produced and thus treated all successful metastatic cells equally, regardless of what point in time they initiated metastatic lesions. However, a more accurate model of the clinical situation allows for the metastatic cells to reproduce in their new location \cite{chaffer2011perspective}. Incorporating this important phenomenon now means that metastatic sites that are initiated earlier are much more dangerous than metastatic lesions initiated late in the course of the disease. 

Third, our prior work \cite{badri2015minimizing} focused solely on radiotherapy. However, when studying the evolution of metastatic disease under treatment, the role of chemotherapy is clearly paramount, given its systemic nature. This has been born out in clinical trials where it has been observed that concurrent CRT reduces the risk of distant metastasis compared to radiotherapy alone (e.g., see \cite{wee2005randomized,lin2003phase}). In the current work, we address these shortcomings by developing a mathematical model that allows for multi-site metastatic disease with possible growth at each location and includes systemic chemotherapy as a possible treatment.

{\color{black}Finally, our previous work only considered static tumor radio-sensitivity parameters ($\alpha$ and $\beta$) of the LQ model over the course of radiotherapy. This ignores tumor re-oxygenation throughout the course of treatment, which is known to enhance tumor radio-sensitivity \cite{hall2006radiobiology,wein2000dynamic}. In this study, we also consider evolving radio-sensitivity parameters due to tumor oxygenation and study the resulting changes in optimal CRT regimens. This requires the development and implementation of a dynamic programming (DP) algorithm with a six-dimensional state space. A naive application of the state-discretization technique for this DP algorithm will be computationally intractable. Hence, we develop approximation methods using efficient data structures to significantly lower the required space to store state-space information. Additionally, we derive and use two mathematical results on the structure of the optimal solution to reduce the computational complexity of our DP algorithm.}

In this work, we consider the problem of finding optimal radiotherapy dosing and a chemotherapy regimen that minimizes the total expected number of metastatic tumor cells at multiple body sites while keeping {normal-tissue damage} below clinically acceptable levels. To study the trade-off between the conventional objective, maximizing TCP, and our novel approach of minimizing the metastatic population, we consider an additional constraint requiring the {\color{black}tumor control} associated with the optimal regimen to exceed a user-specified percentage of that of conventional regimens. To the best of our knowledge, the present study is the first to address optimal CRT fractionation in the context of targeting multi-site metastatic disease and primary tumor control. We examine the mathematical properties of the optimal fractionation scheme in the context of cervical tumors.

The remainder of this paper is organized as follows. In Section \ref{sec: model},  we discuss a model with static radio-sensitivity parameters for multi-site metastasis production and how it can be used to develop a function that reflects metastatic production at different body locations. In Section \ref{sec: solution}, we formulate an optimization model for the CRT fractionation problem, derive some mathematical results regarding the properties of optimal regimens {\color{black}with constant $\alpha$ and $\beta$, extend our model to incorporate dynamics of reoxygenation throughout the course of treatment, and develop a DP solution approach}. In Section \ref{sec: Results}, we present numerical results for a situation involving cervical cancer. Finally, we summarize and conclude the paper in Section \ref{sec: conclusion}. 

\section{Mathematical model of treatment and metastasis development}\label{sec: model}
Consider a treatment regimen in which a radiation dose of $d_i$ and a drug concentration of $c_i$ are administered at treatment fraction $i=1,\dots,N$. CRT treatment regimens are broadly divided into three classes: neoadjuvant, concurrent, or adjuvant, depending on whether the chemotherapeutic agents are administered before, during, or after the course of radiotherapy, respectively. We consider a sufficiently large $N$ to accommodate all possible regimens. {\color{black}The choice of sufficiently large $N$ is consistent with CRT clinical practice, where the duration of the therapy ranges between 6 to 10 weeks. Note that if $N$ is small, then the solution space is limited to concurrent CRT regimens with overlapping drug and radiation administration schedules, potentially excluding promising sequential CRT regimens.}

Our objective is to minimize total metastatic cell population produced by the primary tumor at $n$ different sites within the patient's body at time $T$ after the conclusion of the treatment, while limiting the normal-tissue toxicity and ensuring a certain level of control over the primary tumor. The time horizon $[0,T]$ consists of treatment days and a potentially long period of no treatment, at the end of which total metastatic population is evaluated. In the following sections, we first model the dynamics of the primary tumor and metastatic-cell populations in the presence of chemoradiation cell-kill. Then we discuss a clinically established model used to quantify the primary tumor control and normal-tissue toxicity. These models will be used to formulate the CRT fractionation problem. 

{\color{black}
\subsection{Modeling primary tumor population growth} \label{sec: tumorPop}
We start by modeling the population of primary tumor cells at time $t$ as a function of radiation dose using the well-known LQ model, originated from clinical observations that survival curves of irradiated cell lines follow a linear-quadratic fit on a logarithmic scale \cite{hall2006radiobiology}. According to the LQ model, the surviving fraction ($\mathrm{SF}_{\mathrm{RT}}$) of tumor cells after a single exposure to a radiation dose of $d$ is
\[\mathrm{SF}_{\mathrm{RT}}(d)=\exp\left(-\alpha d-\beta d^2 \right)\]
where $\alpha$ and $\beta$ are tumor radio-sensitivity parameters.

To account for the cytotoxic effect of chemotherapeutic agents on the primary tumor population, we consider the two action mechanisms of \emph{additivity} and \emph{radio-sensitization} associated with CRT and incorporate them into the LQ model. Additivity refers to an added tumor cell-kill due to chemotherapeutic agents, assuming no interaction with radiation. Radio-sensitization, on the other hand, refers to an enhancement in the radiation-induced cell-kill, caused by chemotherapeutic agents \cite{seiwert2007}. Additivity is incorporated into the LQ model by adding a linear term of the chemotherapy drug's concentration, denoted by $c$, to the exponent of the LQ model. This is motivated by the observations that survival curves of cell lines that are exposed to chemotherapeutic agents follow an exponential form in terms of the drug's concentration \cite{skipper1964experimental,jones2014potential}. To accommodate radio-sensitization, we use the approach suggested in \cite{salari2013mathematical} in which a multiplicative term of the radiation dose and chemotherapy drug's concentration is added to the exponent of the LQ model. This is based on clinical observations that radio-sensitizers increase the linear term of the LQ model without causing a considerable change in the quadratic term \cite{dai2013radiosensitivity,franken2011radiosensitization}. Hence, the total fraction of surviving cells after incorporating additivity and radio-sensitization ($\mathrm{SF}_{\mathrm{CRT}}$) can be modified as
\[ \mathrm{SF}_{\mathrm{CRT}}(d,c)=\exp\left(-\alpha d-\beta d^2-\theta c -\psi d c\right) \]
where $\theta$ and $\psi$ are chemo-sensitivity parameters associated with the additivity and radio-sensitization terms, respectively.}

The choice of tumor-growth function is strongly driven by the size of cancer being modeled. Exponential growth is a good model for small-sized tumors. However, limitations of the availability of nutrients, oxygen, and space imply that exponential growth is not appropriate for the long-term growth of solid tumors, suggesting alternative formulations such as Gompertz or logistic models. The current view of tumor-growth kinetics during therapy is based on the general assumption that tumor cells grow exponentially due to the small population \cite{hall2006radiobiology}. Here, we also assume that the tumor is small enough and that it grows exponentially with a time lag during the course of the treatment, known as tumor \emph{kick-off time}. In particular, we are interested in the long-term metastatic risk associated with a CRT regimen evaluated at any time point between the conclusion of therapy and the local recurrence. During this time period, the primary tumor has not yet reached a clinically detectable size again and thus is sufficiently small enough to model with an exponential function.

Consider a CRT regimen $\left(d_i,c_i\right)\;(i=1,\ldots,N)$ that delivers a radiation dose of $d_i$, measured in Gy, and a chemotherapy dose of $c_i$, measured in milligrams per squared meter of body area, on day $i=1,\dots,N$. The primary-tumor population size at time $t\in[0,T]$, denoted by $X_t$, can then be computed by{\color{black}
\begin{equation}\label{xt}
X_t=\begin{cases}X_0e^{-\sum_{i=1}^{t}\left( \alpha d_i+\beta d_i^2+ \theta c_{i}+\psi d_i c_i\right) +\frac{\ln 2}{\tau_d}(t-T_k)^+},& t\le N\\
 X_0 e^{- \sum_{i=1}^{N}(\alpha d_i +\beta d_i^2+\theta c_{i}+\psi d_i c_i) +\frac{\ln 2}{\tau_d}(t-T_k)^+}, & t> N \end{cases}
\end{equation}\normalsize}
where $X_0$ is the initial tumor population, $T_k$ is the tumor kick-off time, and $\tau_d$ shows the tumor doubling time in units of days.
{\color{black}
\subsection{Modeling metastatic population growth}
We adopt a stochastic model used by other studies \cite{hanin2006stochastic,hanin2010,hanin2013} to describe the metastatic production and formation process. Here, we provide details of this model.} The model assumes that only a fraction of the primary tumor cells are capable of producing successful metastatic cells and that these cells do so at rate $\nu$. This fraction is dictated by the fractal dimension of the blood vessels infiltrating the tumor, which we denote by $\xi\in [0,1]$ \cite{iwata2000dynamical}. For example, if we assume that the tumor is roughly spherical and all cells on the surface are capable of metastasis, then we would take $\xi=2/3$; however, if we assume only the cells along the boundary of a one-dimensional blood vessel are capable of metastasis, then we would set $\xi=1/3$ (see \cite{hanin2006stochastic} for a more detailed discussion and estimation of this parameter). A non-homogeneous Poisson process with intensity function $\nu X_t^\xi$ is then used to describe the successful metastatic-cell production. Each successful metastatic cell, independent of other cells, will initiate a metastatic colony at site $i\;(i=1,\ldots,n)$ with probability $p_i$ {($\sum_{i=1}^n p_i=1$)}. Finally, it is assumed that the probability of secondary metastasis, which is the formation of new metastasis originating in other metastatic colonies, is negligible. Therefore, thinning the original Poisson process tells us that metastasis formation at site $i$ follows a non-homogeneous Poison process with rate
$b_i(t)= p_i\nu X_t^\xi.$

We model the growth of each metastatic lesion using a non-homogeneous branching process with a rate dependent on time $t$ and the metastasis site. More specifically, the birth and death rates at site $i\;(i=1,\ldots,n)$ are denoted by $u_i(t)$ and $v_i(t)$, respectively. This is similar to the modeling approach taken in \cite{haeno2012computational}, which used multi-type branching processes to study metastasis and then applied the work to study pancreatic cancer data. It should be noted that this work were able to successfully fit this model to patient data. We consider the growth rate of metastatic cells $\mu_i(t)=u_i(t)-v_i(t)$ in the absence of chemotherapeutic agents as a positive value that is constant with respect to time ($\mu_i(t)=\mu_i$). During chemotherapy, we assume that the growth rate of metastatic cells decreases linearly as a function of the magnitude of the last dose of chemotherapy drug administrated \cite{mould2015developing}. Written mathematically we have
\begin{equation}\label{metgrowth}
\mu_{i}(t)=\begin{cases}\mu_{i}-\omega_i c_{\lfloor t \rfloor} ,& t\le N\\
\mu_{i} ,& t\in (N,T]\end{cases},\ \  i=1,\dots,n
\end{equation}
where $\omega_i$ is a positive constant, depending on heterogeneity in drug concentrations among different sites and sensitivity of tumor cells to the chemotherapy at {\color{black}the $i$th site.}

If we have a total evaluation horizon of time $T$, then the expected number of total metastatic cells at site $i$ is given by the convolution (see \cite{foo2010evolution})
$$R_i(T)=\int_{0}^{T}b_i(t)\exp{\left(\int_{0}^{T-t}\mu_{i}(\eta+t) d\eta \right)dt }.$$
Our goal is to determine an optimal chemo- and radiotherapy fractionation scheme that minimizes the total expected number of metastatic tumor cells at time $T$ at $n$ body sites, i.e.,
\begin{equation}\label{eq_Rt}
R(T)=\sum_{i=1}^{n}R_i(T)=\nu\sum_{i=1}^{n}p_i\int_{0}^{T} X_t^\xi\exp{\left(\int_{0}^{T-t}\mu_{i}(\eta+t) d\eta  \right)dt }.
\end{equation}
Notice that in order to minimize $R(T)$, it is sufficient to optimize $R(T)/\nu$, and there is no need to know parameter $\nu$, which is difficult to estimate.
{\color{black}
\subsection{Modeling primary tumor control and normal-tissue toxicity due to radiation}}
The notion of \emph{biologically equivalent dose} (BED), originally motivated by the LQ model, is used in clinical practice to quantify the biological damage caused by a given radiotherapy regimen in a structure. More specifically, the BED for a radiotherapy regimen consisting of $N$ treatment fractions in which radiation dose $d_i$ is administered in fraction $i\;(i=1,\ldots,N)$ is given by
\begin{equation}\label{eq-beddef}{\color{black}
	\textrm{BED} = \sum_{i=1}^N d_i \left(1+\frac{d_i}{\left[\alpha/\beta \right]} \right) }
\end{equation}
where $\left[\alpha/\beta \right]$ is a structure-specific radio-sensitivity parameter. Several clinical studies have associated different BED values, independent of the number of fractions and the radiation dose per fraction $d_i\;(i=1,\ldots,n)$, to TCP and NTCP values calculated, based on clinical data \cite{fowler201421}. {\color{black}We will use BED to measure and compare the primary tumor control and the normal-tissue toxicity of the radiotherapy schedule $d_i\;(i=1,\ldots,N$) in different CRT treatments.}

\section{Optimization model and solution approach} \label{sec: solution}
Our goal is to determine the optimal radiation dose vector $\vec{d^*}=\{d_1^*,\dots,d_N^*\}$ and optimal chemotherapy drug concentration vector $\vec{c^*}=\{c_1^*,\dots,c_N^*\}$ that minimize the total expected number of metastatic cancer cells at time $T$ at $n$ different anatomical sites. We do not consider the metastatic dissemination prior to the start of the treatment course. One reason for this is that it is difficult to estimate the total life history of a tumor prior to diagnosis. In addition, our model is primarily intended for locally advanced cancer cases, whereas metastasis formation typically occurs in later stages of the disease once the tumor has reached a significantly large size \cite{hanahan2000hallmarks}. 

We start by computing
$$
\int_{0}^{T-t}\mu_{i}(\eta+t) d\eta=\int_{t}^{T}\mu_{i}(s) ds=\begin{cases}\mu_i\cdot(T-t)-\omega_i\left(c_{\lfloor t \rfloor}(\lceil t\rceil-t )+ \sum_{j=\lceil t \rceil}^{N}c_{j}\right)  ,& t< N+1\\
\mu_i\cdot(T-t) ,& t\in [N+1,T]\end{cases}
$$
with notation $c_0=0$ and $\sum_{j=\lceil t \rceil}^{N}c_{j}=0$ for $t>N$. For computational ease, we assume an equal metastasis production for $t\in [ \lfloor t \rfloor,\lceil t \rceil )$, which is evaluated at time $\lfloor t \rfloor$. Specifically we replace ${X_t}$ with $X_t^-$ and $\mu_i\cdot(T-t)-\omega_i\left(c_{\lfloor t \rfloor}(\lceil t\rceil-t )+ \sum_{j=\lceil t \rceil}^{N}c_{j}\right)$ with $\mu_i\cdot(T-t)-\omega_i \sum_{j=\lfloor t \rfloor }^{N}c_{j}$ for $0\le t<N+1$, where $X_t^{-}$ is the number of cells immediately before the delivery of doses $d_t$ and $c_t$ (note that $X_0^{-}=X_0$). In addition, we use a similar approximation approach as that taken in \cite{badri2015minimizing}, which uses a summation rather than an integral during therapy.  To summarize, we make the following approximation:{\color{black}
\begin{equation}
R(T)/\nu\approx \sum_{i=1}^{n}p_i\left( \sum_{t=0}^{N+1}(X_t^-)^\xi\exp{\left(\mu_i\cdot(T-t)-\omega_i\sum_{j=t}^{T}c_{j} \right)}+\int_{N+1}^{T} X_t^\xi e^{\mu_i\cdot(T-t) }dt \right).
\label{eq-RTapprox}
\end{equation}
Substituting $X_t$ from \eqref{xt} into \eqref{eq-RTapprox} yields
$$
R(T)/\nu\approx f(\vec{d},\vec{c})+g\left( \sum_{t=1}^{N}d_t,\sum_{t=1}^{N}d_t^2,\sum_{t=1}^{N}c_t,\sum_{t=1}^{N}d_tc_t\right)
$$
where
\begin{equation}\label{ffunction}
f(\vec{d},\vec{c})=X_0^\xi\sum_{i=1}^{n}p_ie^{\mu_iT} \sum_{t=0}^{N+1}e^{-\xi\left( \sum_{j=1}^{t-1}(\alpha d_j+\beta d_j^2+\theta c_j+\psi d_i c_i)-\frac{\ln 2}{\tau_d}(t-T_k)^+\right)-\mu_it-\omega_i\sum_{j=t}^{T}c_j}
\end{equation}
and

\begin{equation}\label{gfunction}
g\left( \sum_{t=1}^{N}d_t,\sum_{t=1}^{N}d_t^2,\sum_{t=1}^{N}c_t,\sum_{t=1}^{N}d_tc_t\right)=X_0^\xi e^{-\xi\Gamma}\sum_{i=1}^{n}p_ie^{\mu_iT}\frac{e^{-(\mu_i-\xi\frac{\ln 2}{\tau_d})(N+1)}-e^{-(\mu_i-\xi\frac{\ln 2}{\tau_d})T}}{\mu_i-\xi\frac{\ln 2}{\tau_d}}
\end{equation}\normalsize
where
$$
\Gamma=\sum_{t=1}^{N} (\alpha d_t+\beta  d_t^2+\theta c_{t}+\psi d_tc_t)+\frac{\ln 2}{\tau_d}T_k.
$$
}Note that in \eqref{ffunction}, the summation inside the exponential for $t=0, 1$ is zero, i.e.,
$${\color{black}
\sum_{j=1}^{t-1}(\alpha d_j+\beta d_j^2+\theta c_j+\psi d_jc_j)=0}
$$
for $t=0,1$, and we assume $T_k<N$, which is typically the case for all disease sites. We now consider the fractionation problem in terms of decision variables of radiation dose $d_i$ and drug concentration $c_i$ at fraction $i=1,\dots,N$, which leads to a minimal metastasis population size while maintaining acceptable levels of normal tissues damage.
{\color{black}
\begin{equation}\label{eq-obj1}
\underset{d_t,c_t\ge0}{\text{minimize }} f(\vec{d},\vec{c})+g\left( \sum_{t=1}^{N}d_t,\sum_{t=1}^{N}d_t^2,\sum_{t=1}^{N}c_t,\sum_{t=1}^{N}d_tc_t\right)
\end{equation}}
\normalsize
s.t.
\begin{equation}\label{eq-bedcon1}
\sum_{t=1}^{N}\gamma_j d_t\left(1+\frac{\gamma_j d_t}{\left[\alpha/\beta\right]_j}\right)\le \textrm{BED}_j , \ \ j=1,\dots,M
\end{equation}
\begin{equation}\label{eq-bedtum}
\sum_{t=1}^{N} d_t\left(1+\frac{d_t}{\left[\alpha/\beta\right]}\right)\ge {\color{black}\varrho} \times\textrm{BED}_{\text{std}}
\end{equation}
\begin{equation}\label{eq-chemocon1}
\sum_{t=1}^{N}c_{t}\le C_{\max}
\end{equation}
\begin{equation}\label{eq-radcon1}
d_t\le d_{\max}, \ \ t=1,\dots,N
\end{equation}
\begin{equation}\label{eq-checon1}
c_{t}\le c_{\max}, \ \ t=1,\dots,N
\end{equation}

The objective function evaluates the total metastatic population (divided by $\nu$) at time $T$ at $n$ different body organs. Constraints \eqref{eq-bedcon1} and \eqref{eq-chemocon1} are toxicity constraints associated with radiotherapy and chemotherapy, respectively. More specifically, we use the BED model in (\ref{eq-beddef}) to constrain the radiation side effects in the surrounding healthy structures around the tumor \cite{hall2006radiobiology}. In doing so, we assume that there are $M$ healthy tissues in the vicinity of the tumor, and a dose $d$ results in a homogeneous dose $\gamma_jd$ in the $j$th normal tissue, where $\gamma_j$ is the sparing factor. Parameter $\left[\alpha/\beta\right]_j$ represents the BED parameter, and $\textrm{BED}_j$ specifies the maximum BED allowed in the $j$th healthy tissue. We limit the maximum cumulative chemotherapy dose by $C_{\max}$, which is the total drug dose that can be safely delivered to a patient over the period of treatment, measured in milligrams/meter squared \cite{mccall2008evolutionary}.  Inequalities \eqref{eq-radcon1} and \eqref{eq-checon1} limit the maximum allowable daily radiation dose and chemotherapy drug concentration, respectively, where $d_{\max}$ and $c_{\max}$ show the maximum tolerable dose, measured in Gy and milligrams/meter squared, respectively, that can be delivered in a single day \cite{mccall2008evolutionary}. 

Finally, constraint \eqref{eq-bedtum} guarantees that tumor BED imposed by the optimal schedule is greater than or equal to {\color{black}$\varrho\times 100$} percent of the tumor BED realized by the standard schedule. Note that TCP is an increasing function of tumor BED for a homogeneous radiation dose, and thus TCP and tumor BED maximization are equivalent.  {\color{black}The parameter $0\le \varrho \le 1$} is an input to our model and characterizes the trade-off between maximizing TCP and minimizing metastatic cell population (see, e.g., \cite{chankong1983multiobjective} for a review of optimization methods to characterize the trade-off between the two objectives). If the primary cause of cancer-related death in a specific tumor is metastatic dissemination (e.g. pancreatic or cervical cancer), then a relatively small value of {\color{black}$\varrho$} (e.g., $70\%$) may be used. On the other hand, if the primary tumor may cause death (e.g., lung cancer), then a larger value of {\color{black}$\varrho$} will be used (e.g., $98\%$).  

The formulation \eqref{eq-obj1}--\eqref{eq-checon1} is a nonconvex, quadratically constrained program. Such problems are computationally difficult to solve in general. In the rest of this section, we first derive some mathematical results regarding the structure of the optimal solution of \eqref{eq-obj1}--\eqref{eq-checon1} and then develop a DP algorithm to solve the problem.

We start by discussing an important property of the optimal chemotherapy dose vector.
\begin{lemma}\label{lemmasum}
Let the optimal drug vector of problem \eqref{eq-obj1}--\eqref{eq-checon1} be $\vec{c^*}=\{c_1^*,\dots,c_{N}^*\}$; then in optimality, we have $$\sum_{t=1}^{N}c_t^*=C_{\max}.$$
\end{lemma}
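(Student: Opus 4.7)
The plan is to show that the objective $F(\vec{d},\vec{c}) := f(\vec{d},\vec{c}) + g(\cdot)$ is strictly monotone decreasing in every component $c_t$ with the other variables held fixed, and then use a one-coordinate perturbation argument to contradict optimality whenever $\sum c_t^* < C_{\max}$.

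First I would fix $\vec{d}$ and differentiate $f$ from \eqref{ffunction} with respect to an arbitrary $c_t$, $t\in\{1,\dots,N\}$. In the summand indexed by $(i,t')$, the variable $c_t$ enters the exponent in exactly one of two ways depending on how $t'$ compares to $t$: when $t'>t$, it enters through the inner sum $\sum_{j=1}^{t'-1}$ with coefficient $-\xi(\theta+\psi d_t)\le 0$; when $t'\le t$, it enters through the trailing sum $\sum_{j=t}^{T}c_j$ with coefficient $-\omega_i<0$. Since every exponential factor is positive and at least one of these two coefficients is strictly negative under the standard parameter regime ($\theta>0$ and $\omega_i>0$ for at least one $i$ with $p_i>0$), I obtain $\partial f/\partial c_t<0$. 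Next, in $g$ from \eqref{gfunction}, the only $c_t$-dependence is through $e^{-\xi\Gamma}$ with $\partial\Gamma/\partial c_t=\theta+\psi d_t\ge 0$, so $\partial g/\partial c_t\le 0$. Combining, $\partial F/\partial c_t<0$ at every feasible point.

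Second, I would argue by contradiction. Suppose $\sum_{t=1}^{N}c_t^*<C_{\max}$. Under the natural sizing condition $Nc_{\max}\ge C_{\max}$ (implicit in the formulation, otherwise the budget constraint is vacuous), some index $s$ must satisfy $c_s^*<c_{\max}$, since otherwise $\sum c_t^*=Nc_{\max}\ge C_{\max}$, contradicting the supposition. Choosing $\varepsilon>0$ small enough that both $c_s^*+\varepsilon\le c_{\max}$ and $\sum_{t}c_t^*+\varepsilon\le C_{\max}$, the perturbed schedule $(\vec{d^*},\vec{c^*}+\varepsilon\vec{e}_s)$ remains feasible for every constraint \eqref{eq-bedcon1}--\eqref{eq-checon1} (none of the radiation constraints are affected by $\vec{c}$), and by strict monotonicity attains a strictly smaller objective value than $(\vec{d^*},\vec{c^*})$, contradicting optimality. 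Hence $\sum_{t=1}^{N}c_t^*=C_{\max}$.

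The only genuinely fiddly piece — not really an obstacle — is the casewise verification of which coefficient ($-\xi(\theta+\psi d_t)$ versus $-\omega_i$) governs $\partial f/\partial c_t$ as $t'$ ranges over $\{0,1,\dots,N+1\}$, including the boundary indices $t'=0$ and $t'=N+1$ where one of the two sums in the exponent is empty. Once that bookkeeping is done, monotonicity of $F$ in each $c_t$ and the perturbation argument are immediate.
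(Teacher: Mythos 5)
Your proof is correct and takes essentially the same route as the paper: a contradiction/perturbation argument that bumps up a single chemotherapy dose with slack in the budget, using monotonicity of $f$ and $g$ in each $c_t$ together with the sizing condition $C_{\max}< Nc_{\max}$ to find a coordinate below $c_{\max}$. If anything, your bookkeeping is slightly more careful on strictness, since the strictly negative contribution comes from the $-\omega_i\sum_{j=t}^{T}c_j$ terms, whereas the paper's proof only cites positivity of $\alpha,\beta,\theta,\psi$.
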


The proof is in Appendix \ref{lemmasumProof}. Lemma \ref{lemmasum} states that any optimal solution to \eqref{eq-obj1}--\eqref{eq-checon1} yields a chemotherapy regimen with the maximum dose allowed. Using the result of Lemma \ref{lemmasum}, we can rewrite function $f(\vec{d},\vec{c})$ as {\color{black}
\begin{equation}\label{eq-revisedf}
f(\vec{d},\vec{c})=X_0^\xi \sum_{i=1}^{n}p_ie^{\mu_iT}\sum_{t=0}^{N+1}e^{-\xi\left( \sum_{j=1}^{t-1}(\alpha d_j+\beta d_j^2+(\theta-\omega_i/\xi) c_j+\psi d_jc_j)-\frac{\ln 2}{\tau_d}(t-T_k)^+\right)-\mu_it-\omega_iC_{\max}}.
\end{equation}

\subsection{Developing DP solution method} \label{sec: DP}
Using the new form of function $f(\vec{d},\vec{c})$ derived in \eqref{eq-revisedf}, we develop a DP algorithm to find the optimal daily radiation and chemotherapy doses. Stages of the DP algorithm correspond to treatment days $t \in \left\{1,\ldots,T\right\}$. At stage $t$, the state of the system is characterized by $\mathcal{A}_t=\left\{U_t,V_t,S_t,W_t\right\}$, where $U_t$ and $V_t$ are the cumulative dose and cumulative dose squared, respectively, $S_t$ is the cumulative chemotherapy dose, and $W_t$ is the cumulative product of daily radiotherapy and chemotherapy doses, all calculated after the delivery of radiation and chemotherapy on day $t$. The control variables at stage $t$ are the radiation and chemotherapy doses administered during that stage. Given control variables $\left(d_t,c_t\right)$, the state of the system is updated according to
\[U_{t}=U_{t-1}+d_{t},\ \ V_{t}=V_{t-1}+d_{t}^2,\ \ S_{t}=S_{t-1}+c_{t}, \ \ W_t=W_{t-1}+d_tc_{t}.\]
Let $J_t: \mathbb{R}^4 \rightarrow \mathbb{R}$ be the cost-to-go function at stage $t \;(t=1,\ldots,T)$. Using the functions introduced in \eqref{gfunction} and \eqref{eq-revisedf}, the forward recursion of our DP algorithm can be written as

$$J_{t}(\mathcal{A}_t)=\begin{cases}\min_{d_{t},c_t\ge0}\ \ J_{t-1}(\mathcal{A}_{t-1})+L(d_{t},c_t,\mathcal{A}_{t-1}) ,& 1\le t\le N-1\\
\min_{d_{t},c_t\ge0} \ \ J_{t-1}(\mathcal{A}_{t-1})+g(U_{t-1}+d_t,V_{t-1}+d_t^2,C_{\max},W_{t-1}+d_tc_t) ,& t=N\end{cases}$$
\normalsize
where
$$
L(d_{t},c_t,\mathcal{A}_{t-1})=X_0^\xi \sum_{i=1}^{n}p_i e^{-\xi\times\Psi_{t,i}+\mu_i\cdot(T-t)-\omega_i C_{\max}}
$$\normalsize
$$
\Psi_{t,i}= \alpha  (U_{t-1}+d_{t})+\beta (V_{t-1}+d_{t}^2)+(\theta-\omega_i/\xi) (S_{t-1}+c_t)+\psi(W_{t-1}+d_tc_t)-\frac{\ln 2}{\tau_d}(t-T_k)^+
$$
with initial conditions $\mathcal{A}_{0}=\{0,0,0,0\}$ and $J_0(\mathcal{A}_{0})=X_0^\xi \sum_{i=1}^{n}p_ie^{\mu_i T-\omega_i C_{\max}}.$ We set the function $J_N(\mathcal{A}_{N})$ to be $\infty$ if  
$$ 
U_N+\frac{\beta}{\alpha} V_N< \varrho\times \textrm{BED}_{\text{std}}
$$ 
and $J_t(\mathcal{A}_{t})$, $t=1,\dots,N$ to be $\infty$, if any of the following constraints is violated:
$$
{\gamma_j U_t+\frac{\gamma_j^2 V_t}{\left[\alpha/\beta\right]_j}> \textrm{BED}_j, \ \ j=1,\dots,M; \text{ or } S_t>C_{\max}.}
$$

A standard approach to solving Bellman's equations with forward recursion is to discretize the state space and use a linear interpolation of appropriate discretized values to increase the accuracy of cost-to-go function estimations at intermediate state values \cite{badri2015minimizing,salari2013mathematical,bortfeld2013optimization}. Depending on the size of the state space, a naive implementation of this approach may be computationally intractable due to the large number of possible discretized states, known as the curse of dimensionality. To alleviate the computational burden of the discretization approach in our DP framework, we employ a dynamic table data structure to only store the information related to feasible states. Columns and rows of this table correspond to state attributes and feasible states at different DP stages, respectively. This leads to a significant reduction in the number of discretized states considered. The details of this implementation is in Appendix \ref{algorithm}.

In the rest of this section, we study two special cases of the proposed model. The first special case considers CRT regimens in the absence of radio-sensitization and derives a closed-form solution for the optimal chemotherapy schedule, which significantly reduces the computational complexity of the DP algorithm. The second case considers a new version of the proposed model in which tumor radio-sensitivity parameters vary during the course of the treatment.}
{\color{black}
\subsection{Optimal CRT regimens in the absence of radio-sensitization}
Several cytotoxic agents, such as temozolomide, are suggested to only have an additive effect
\cite{chalmers2009cytotoxic}. This corresponds to those chemotherapeutic agents that exhibit insignificant or no radio-sensitization, which translates to $\psi=0$ in tumor population dynamics introduced in \eqref{xt}. In this section, we solve the problem \eqref{eq-obj1}--\eqref{eq-checon1} for the special case of negligible radio-sensitization effect ($\psi=0$).} First, we use the new form of function $f(\vec{d},\vec{c})$ stated in \eqref{eq-revisedf} and show an interesting property of the optimal structure of radiation and chemotherapy schedules.
\begin{lemma}\label{lemmaordering}
Let the optimal dose and drug vector of problem \eqref{eq-obj1}--\eqref{eq-checon1} be $\vec{d^*}=\{d_1^*,\dots,d_{N}^*\}$ and $\vec{c^*}=\{c_1^*,\dots,c_{N}^*\}$, respectively; {\color{black}if $\psi=0$,} then, in optimality, we have $$d_1^*\ge d_2^*\ge\dots\ge d_{N}^*$$ and 
$$\begin{cases}
c_1^*\ge c_2^*\ge\dots\ge c_{N}^*, & \text{ if }\theta\xi\ge\max_{i=1,\dots,n} \omega_i\\
c_1^*\le c_2^*\le\dots\le c_{N}^*, & \text{ if }\theta\xi<\min_{i=1,\dots,n} \omega_i.
\end{cases}$$
\end{lemma}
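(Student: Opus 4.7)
The plan is to prove both orderings via a pairwise exchange (bubble-sort) argument. I will assume that an optimal $(\vec{d^*},\vec{c^*})$ violates one of the claimed monotonicities at an adjacent pair $(s,s+1)$, swap the two values at those positions while holding every other variable fixed, and show that the swap remains feasible and strictly improves the objective --- contradicting optimality.

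My first step is to observe that every constraint and the $g$ part of the objective are invariant under such a swap. Each of \eqref{eq-bedcon1}, \eqref{eq-bedtum}, \eqref{eq-chemocon1} depends on $(\vec{d},\vec{c})$ only through the symmetric sums $\sum_t d_t$, $\sum_t d_t^2$, $\sum_t c_t$; the box constraints \eqref{eq-radcon1}--\eqref{eq-checon1} are preserved because both swapped values were individually feasible; and when $\psi=0$, the function $g$ likewise depends only on those same sums. This reduces the problem to tracking the change in $f(\vec d,\vec c)$ in the form \eqref{eq-revisedf}.

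The key structural observation about $f$ is as follows. Writing $A_{t,i} = \sum_{j=1}^{t-1}\bigl(\alpha d_j+\beta d_j^2+(\theta-\omega_i/\xi)c_j\bigr)-\tfrac{\ln 2}{\tau_d}(t-T_k)^+$, the function $f$ in \eqref{eq-revisedf} (with $\psi=0$) is a positively weighted sum of terms $e^{-\xi A_{t,i}}$ over $t=0,\dots,N+1$ and $i=1,\dots,n$. A swap at positions $s$ and $s+1$ leaves $A_{t,i}$ unchanged for $t\le s$ (neither index appears in the defining sum) and for $t\ge s+2$ (both indices appear, and the partial sums are permutation-invariant). The only affected quantity is $A_{s+1,i}$, whose defining sum contains $j=s$ but not $j=s+1$. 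For the radiation swap with $d_s^*<d_{s+1}^*$, this yields $\Delta A_{s+1,i}=\alpha(d_{s+1}^*-d_s^*)+\beta\bigl((d_{s+1}^*)^2-(d_s^*)^2\bigr)>0$ uniformly in $i$, so each $e^{-\xi A_{s+1,i}}$ strictly shrinks and $f$ strictly decreases, forcing $d_1^*\ge\cdots\ge d_N^*$. For a chemotherapy swap, $\Delta A_{s+1,i}=(\theta-\omega_i/\xi)(c_{s+1}^{*}-c_s^{*})$, whose sign is dictated by $\theta-\omega_i/\xi$: when $\theta\xi\ge\max_i\omega_i$ this quantity is nonnegative for every $i$, so placing the larger $c$ first weakly improves $f$ and yields $c_1^*\ge\cdots\ge c_N^*$; when $\theta\xi<\min_i\omega_i$ the coefficients are strictly negative, the improving direction reverses, and the optimum becomes non-decreasing in $t$.

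The main delicacy I anticipate is the degenerate case in which $\theta-\omega_i/\xi=0$ for some (but not all) sites $i$: there the swap is only weakly improving along those terms, so the argument yields weak monotonicity and a standard tie-breaking convention is needed to select an optimum of the claimed monotone form. Everything else reduces to the combinatorial bookkeeping identifying $A_{s+1,i}$ as the unique affected summand, which follows directly from the index ranges in the definition of $A_{t,i}$.
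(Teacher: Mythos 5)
Your proposal is correct and is essentially the paper's own argument: an exchange (swap) step that exploits the permutation-invariance of the constraints and of $g$ when $\psi=0$, together with the fact that $f$ is decreasing in the prefix sums appearing in the exponent; the paper swaps an arbitrary out-of-order pair $i<j$ and compares all partial sums, whereas you use adjacent swaps and note that only the single term $A_{s+1,i}$ changes, which is just a tidier bookkeeping of the same idea. Your remark about the degenerate case $\theta-\omega_i/\xi=0$ (where the exchange is only weakly improving, so one should state the conclusion as the existence of a monotone optimum) is a fair point that applies equally to the paper's proof, which also only establishes weak inequalities there.
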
 

The proof is in Appendix \ref{lemmaorderingProof}. In Lemma \ref{lemmaordering}, we specify the general structure of the optimal CRT regimen in the absence of any radio-sensitivity effect. In the following result, we find the optimal chemotherapy drug vector for two special cases.
\begin{theorem}\label{theoremchemo}
Let the optimal drug vector of problem \eqref{eq-obj1}--\eqref{eq-checon1} be $\vec{c^*}=\{c_1^*,\dots,c_{N}^*\}$; {\color{black}if $\psi=0$;} then  $\vec{c^*}$ takes one of the following two forms, if we have $\theta\xi\ge\max_{i=1,\dots,n} \omega_i$ or $\theta\xi<\min_{i=1,\dots,n} \omega_i$:
\begin{enumerate}
\item $\theta\xi\ge\max_{i=1,\dots,n} \omega_i$: Optimal schedule is given by $c_i^*=c_{\max}$ for $i=1,\dots,k$, $c_{k+1}^*=C_{\max}-kc_{\max}$ and $c_i^*=0$ for $i=k+2,\dots,N$, where $k=\lfloor C_{\max}/c_{\max}\rfloor$.
\item $\theta\xi<\min_{i=1,\dots,n} \omega_i$: Optimal schedule is given by $c_i^*=c_{\max}$ for $i=N-k+1,\dots,N$, $c_{N-k}^*=C_{\max}-kc_{\max}$ and $c_i^*=0$ for $i=1,\dots,N-k-1$, where $k=\lfloor C_{\max}/c_{\max}\rfloor$.
\end{enumerate}
\end{theorem}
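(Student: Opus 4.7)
The plan is to exploit the structure revealed in \eqref{eq-revisedf} once $\psi=0$ and the conclusion of Lemma \ref{lemmasum} ($\sum_{t=1}^N c_t^*=C_{\max}$) are combined. First, I would observe that, because $\psi=0$, the argument $\sum_{t=1}^N d_t c_t$ of $g$ drops out and the remaining dependence of $g$ on $\vec{c}$ is only through $\sum_{t=1}^N c_t = C_{\max}$, which is now fixed. Consequently, for any fixed radiation vector $\vec{d}$, the task of choosing the optimal chemotherapy vector reduces to minimizing $f(\vec{d},\vec{c})$ alone, over the set $\{\vec{c}\ge 0:\,c_t\le c_{\max},\ \sum_{t=1}^N c_t=C_{\max}\}$.

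Next, I would rewrite \eqref{eq-revisedf} in terms of the cumulative chemotherapy sums $S_{t-1}:=\sum_{j=1}^{t-1} c_j$. With $\psi=0$, the inner sum simplifies to $(\theta-\omega_i/\xi) S_{t-1}$, and $f$ takes the form
\[
f(\vec d,\vec c)=\sum_{t=0}^{N+1}\sum_{i=1}^{n} K_{i,t}(\vec d)\,\exp\bigl(-(\xi\theta-\omega_i)\,S_{t-1}\bigr),
\]
where each coefficient $K_{i,t}(\vec d)>0$ depends only on $\vec{d}$ and on parameters unrelated to $\vec{c}$. Hence minimizing $f$ over $\vec{c}$ is equivalent to driving every partial sum $S_{t-1}$ in the direction that makes each exponential small. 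In Case 1, $\xi\theta\ge\max_i\omega_i$ implies $-(\xi\theta-\omega_i)\le 0$ for every $i$, so $f$ is nonincreasing in each $S_{t-1}$; we want each partial sum to be as \emph{large} as possible. In Case 2, $\xi\theta<\min_i\omega_i$ reverses every sign, so we want each $S_{t-1}$ as \emph{small} as possible.

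The central remaining step is the elementary but crucial claim that the two candidate schedules in the theorem \emph{simultaneously} attain the pointwise extrema of all $S_{t-1}$ over the feasible set. For Case 1, I would note that any feasible $\vec{c}$ satisfies the pointwise bound $S_{t-1}\le \min\{(t-1)c_{\max},\,C_{\max}\}$, and verify that the front-loaded schedule $c_i^*=c_{\max}$ for $i\le k$, $c_{k+1}^*=C_{\max}-k c_{\max}$, $c_i^*=0$ for $i\ge k+2$ with $k=\lfloor C_{\max}/c_{\max}\rfloor$ meets this upper bound for every $t$. For Case 2, the mirror bound $S_{t-1}\ge\max\{0,\,C_{\max}-(N-t+1)c_{\max}\}$ is attained by the back-loaded schedule specified in part 2. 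Because the objective is a positive linear combination of exponentials that are monotone in each $S_{t-1}$ in a common direction, any schedule that pointwise extremizes the partial sums must be optimal.

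The main obstacle is really only bookkeeping: justifying that the objective's dependence on $\vec{c}$ collapses entirely onto the cumulative sums (once $\psi=0$ and Lemma \ref{lemmasum} are invoked), and that the simultaneous pointwise extremization is in fact achievable by a single feasible vector. If there is a subtle point, it is that one must include the boundary terms $t=0$ and $t=N+1$ of the outer sum and confirm that these do not disturb the direction of monotonicity; a quick check shows their coefficients are still nonnegative, so they only strengthen the same monotonicity argument. A short uniqueness remark (or a standard exchange argument swapping mass from a later to an earlier day in Case 1, and vice versa in Case 2) closes the proof cleanly.
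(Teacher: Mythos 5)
Your proposal is correct, and it rests on the same structural insight as the paper --- once $\psi=0$ and Lemma \ref{lemmasum} fix $\sum_t c_t = C_{\max}$, the function $g$ no longer depends on how the chemotherapy is distributed, and $f$ depends on $\vec{c}$ only through the partial sums $S_{t-1}$, each entering with sign governed by $\xi\theta-\omega_i$ --- but your route to the conclusion differs from the paper's. The paper argues by contradiction with a pairwise exchange: given an optimum with $c_i^*<c_{\max}$ and $c_j^*>0$ for $j>i$ (Case 1), it shifts mass from day $j$ to day $i$, verifies that every partial sum weakly increases while $g$ and feasibility are unchanged, and then iterates this improvement step until the front-loaded (respectively back-loaded) schedule is reached. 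You instead give a direct optimality certificate: each term $K_{i,t}(\vec d)\exp\left(-(\xi\theta-\omega_i)S_{t-1}\right)$ is monotone in $S_{t-1}$ in a common direction, the feasible partial sums obey the pointwise bounds $S_{t-1}\le\min\{(t-1)c_{\max},C_{\max}\}$ and $S_{t-1}\ge\max\{0,C_{\max}-(N-t+1)c_{\max}\}$, and the candidate schedules attain all of these bounds simultaneously, so they minimize $f$ term by term for any fixed $\vec d$ (the constraints never couple $\vec c$ and $\vec d$, so fixing $\vec d=\vec{d}^*$ is legitimate). This dominance argument avoids the somewhat informal ``repeat the procedure until no improvement'' step of the paper and handles the boundary terms $t=0,N+1$ cleanly, since they do not depend on the allocation; the paper's exchange argument, conversely, is the natural tool if one wants the monotone-ordering statement of Lemma \ref{lemmaordering} as well. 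One small point worth making explicit in Case 2: Lemma \ref{lemmasum} (which you invoke to fix $\sum_t c_t=C_{\max}$) remains valid even when $\theta-\omega_i/\xi<0$, because its proof uses the original form \eqref{ffunction}, in which increasing any $c_j$ weakly decreases every exponent; with that noted, your argument is complete, with the same caveat as the paper that ties are possible (e.g.\ when $\xi\theta=\omega_i$ for all $i$), so ``the'' optimal schedule should be read as ``an'' optimal schedule.
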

The proof is in Appendix \ref{theoremchemoProof}. The following three important observations are made for optimal CRT treatment regimens based on the results of Lemma \ref{lemmaordering} and Theorem \ref{theoremchemo}. {\color{black}Note that in the following, we assume that radio-sensitization effect is negligible.}

\begin{remark}
The non-increasing order of the optimal radiation dose vector obtained in Lemma \ref{lemmaordering} suggests that it is always optimal to immediately start the radiotherapy treatment. Furthermore, it is optimal to immediately initiate chemotherapy if $\theta\xi\ge\max_{i=1,\dots,n} \omega_i$, or to postpone chemotherapy until the end of the treatment if $\theta\xi<\min_{i=1,\dots,n} \omega_i$,  suggesting a concurrent and adjuvant CRT regimen, respectively. Hence, Lemma \ref{lemmaordering} tells us that it is never optimal to use neoadjuvant CRT regimens, which is consistent with clinical observations \cite{green2001survival}(see conclusion for more discussion).
\end{remark}

\begin{remark}
The resulting radiotherapy fractionation schedules for minimizing total metastatic cancer cells suggest a non-increasing radiation fractionated structure, which is due to the structure of the model. Metastatic cells initiated at distant locations are not killed by radiotherapy doses. Thus, in order to minimize the metastatic cell population using radiation, it is necessary to quickly reduce the primary tumor cell population since this is the source of metastatic lesions. Also, note that given the growth of metastatic lesions, we are particularly concerned with metastatic lesions created early in the course of therapy.
\end{remark}

\begin{remark}
The resulting chemotherapy fractionation schedules for minimizing total metastatic cancer cells suggest a hypo-fractionated structure concentrated at the beginning or the end of the schedule. This is due to the fact that if the combined effect $(\theta\xi)$ of chemotherapy-induced cell-kill $(\theta)$ and the fraction of cells capable of metastasis in the primary tumor $(\xi)$ are larger than the chemotherapy-induced metastatic cell-kill at all distant locations $(\max_{i=1,\dots,n}\{\omega_i\})$, then we consider the primary tumor cell population as dangerous, due to the inability of chemotherapy to control the metastatic cell populations. Hence, it is preferable to target the primary tumor and reduce its size as quickly as possible. However, if the effect of the chemotherapeutic agents on all distant metastatic sites $(\min_{i=1,\dots,n}\{\omega_i\})$ is larger than the product of chemotherapy-induced cell-kill $(\theta)$ and the fraction of cells that are capable of metastasis in the primary tumor $(\xi)$, then we consider that the chemotherapeutic agents are more effective in controlling metastatic disease than the primary tumor control. Hence, we prefer to administer chemotherapy at the end of treatment when we can target the accumulated metastatic cells.
\end{remark}

Finally, we establish a result that describes the closed-form solution of the optimal radiotherapy schedule for a special case in which $\beta=0$ in equation (\ref{xt}), $\left[\alpha/\beta \right]_j \rightarrow \infty\;(j=1,\ldots,M)$ in inequalities (\ref{eq-bedcon1}), and $\left[\alpha/\beta \right] \rightarrow \infty$ in inequality (\ref{eq-bedtum}). The special case entails the use of a linear log-survival model, rather than LQ, to describe tumor cell-kill. Also, it implies the use of physical dose, rather than BED, to measure normal-tissue toxicity and primary tumor control. This case is motivated by the fact that tissue response to small radiation doses is dominated by the linear term \cite{sachs2005solid,awwad2013radiation}, which is applicable to our model when $d_{\max}$ is sufficiently small (around $2$ Gy \cite{nahum1992maximizing}).

\begin{proposition}
If we assume that radiation-induced tumor cell-kill has a linear form, i.e., $\beta=0$ and radiation-induced toxicity in normal-tissue $j\;(j=1,\ldots,M)$ and the primary tumor control are evaluated using the physical dose, i.e., $\left[\alpha/\beta\right]_j \rightarrow \infty$ and $\left[\alpha/\beta \right] \rightarrow \infty$, then the optimal radiotherapy schedule is given by $d_i^*=d_{\max}$ for $i=1,\dots,k$, $d_{k+1}^*=D_{\max} -kd_{\max}$ and $d_i^*=0$ for $i=k+2,\dots,N$, where $k=\lfloor D_{\max} /d_{\max}\rfloor$ and $D_{\max}=\min_{j=1,\dots,M}\left\lbrace \normalfont{\textrm{BED}}_j/\gamma_j \right\rbrace$.
\end{proposition}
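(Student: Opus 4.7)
The plan is to reduce the proposition to a monotonicity-and-exchange argument once the stated limits collapse both the feasible region and the objective into expressions that depend on $\vec d$ in a very controlled way. In the limit $\left[\alpha/\beta\right]_j\to\infty$ the quadratic term in \eqref{eq-bedcon1} vanishes, so the healthy-tissue bounds become $\gamma_j\sum_{t=1}^N d_t\le \textrm{BED}_j$, whose binding envelope is $\sum_t d_t\le D_{\max}$. Likewise, $\left[\alpha/\beta\right]\to\infty$ reduces \eqref{eq-bedtum} to $\sum_t d_t\ge \varrho\cdot \textrm{BED}_{\text{std}}$. Setting $\beta=0$ together with the standing $\psi=0$ of the current subsection, $\vec d$ enters $f$ in \eqref{eq-revisedf} only through the partial sums $\alpha\sum_{j<t}d_j$ (each multiplied by $-\xi$ inside an exponential summand) and enters $g$ in \eqref{gfunction} only through the total $\alpha\sum_t d_t$ (again with a negative sign inside an exponential). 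Hence, for every fixed $\vec c$ both $f$ and $g$ are strictly decreasing in every individual $d_t$.

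The first consequence is that the toxicity constraint binds: $\sum_t d_t^*=D_{\max}$. If not, some $d_t$ with slack to $d_{\max}$ could be raised to strictly decrease the objective (I tacitly assume $\varrho\cdot \textrm{BED}_{\text{std}}\le D_{\max}$, else the problem is infeasible; the edge case $D_{\max}>N d_{\max}$ is handled the same way with $\sum_t d_t^*=N d_{\max}$). With the total pinned, $g$ contributes a constant to the remaining search, and the problem reduces to minimizing $f$ over vectors $\vec d$ with $\sum d_t$ fixed and $0\le d_t\le d_{\max}$.

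The ordering $d_1^*\ge d_2^*\ge\cdots\ge d_N^*$ follows from a swap: if $d_a^*<d_b^*$ for some $a<b$, exchanging the two values leaves partial sums $\sum_{j<t}d_j$ unchanged for $t\le a$ and for $t>b$, but strictly increases them by $d_b^*-d_a^*$ for $a<t\le b$. Each affected summand of $f$ is a decreasing exponential in that partial sum, so the swapped vector yields a strictly smaller $f$, contradicting optimality. A second local perturbation pins down the bang-bang shape: if there exist $a<b$ with $d_a^*<d_{\max}$ and $d_b^*>0$, transfer $\varepsilon>0$ from $d_b^*$ to $d_a^*$ with $\varepsilon$ small enough to preserve non-increasing order and the box constraints. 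The shift is volume-preserving, so every BED constraint and $g$ itself are untouched, while the same partial-sum argument strictly reduces $f$. Therefore the first $k=\lfloor D_{\max}/d_{\max}\rfloor$ days deliver $d_{\max}$, a single transitional day carries the residual $D_{\max}-k d_{\max}$, and the remaining days carry zero, exactly as claimed.

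The main technical point is the bookkeeping of the last perturbation, where $\varepsilon$ must simultaneously respect nonnegativity, the per-day cap, and the monotone ordering already established; because all shifts are volume-preserving, the BED inequalities \eqref{eq-bedcon1}--\eqref{eq-bedtum} never reappear in the perturbative analysis, and the whole argument rests on the strict monotonicity of each $f$-summand in its preceding partial sum. That monotonicity is exactly where $\beta=0$, together with the standing $\psi=0$, is indispensable, as only then does the exponent depend linearly on $d_j$ through a single, nonnegatively weighted term.
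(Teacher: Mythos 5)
Your proof is correct and takes essentially the same route as the paper: you collapse the normal-tissue constraints to the single budget $\sum_{t=1}^{N} d_t\le D_{\max}=\min_{j}\left\lbrace \textrm{BED}_j/\gamma_j\right\rbrace$ and then run the saturation-plus-exchange argument of Lemma \ref{lemmasum} and Theorem \ref{theoremchemo} (case $\theta\xi\ge\max_{i}\omega_i$) on the radiation doses, exploiting that with $\beta=0$ and $\psi=0$ the dose enters $f$ only through partial sums and $g$ only through the total. You simply make explicit the details the paper delegates to its ``almost identical argument'' remark.
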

In order to explain this result, we first note that the inequality $\sum_{t=1}^{N}d_t\le \min_{j=1,\dots,M}\left\lbrace\textrm{BED}_j/\gamma_j  \right\rbrace $ defines the set of feasible regions associated with the radiation-induced toxicity constraints in normal tissues, and thus we can remove the remaining $M-1$ redundant inequalities from the constraint set. Then, an almost identical argument as used in proof of {\color{black}Theorem \ref{theoremchemo}} for the case of $\theta\xi\ge\max_{i=1,\dots,n}\omega_i$ can be implemented to derive the optimal dose vector.

{\color{black}If either of the conditions outlined in Theorem \ref{theoremchemo} are applicable, then the optimal chemotherapy schedule exists in closed form. It then remains to find the optimal radiation doses, which can be determined using a reduced version of the DP algorithm developed in Section \ref{sec: DP} and considering only a 2D state space consisting of $U_t$ and $V_t$. Otherwise, the optimal solution can be obtained using the DP algorithm with a 3D state space consisting of $U_t$, $V_t$, and $S_t$, since there is no radio-sensitization in this case.}
{\color{black}
\subsection{Dynamic tumor radio-sensitivity parameters}
The degree of tumor oxygenation is a crucial determinant of the effectiveness of radiotherapy. In particular, an insufficient supply of oxygen in tumor, known as \emph{hypoxia}, causes radio-resistance in most tumors, thereby  adversely affecting the cancer treatment outcome in solid tumors \cite{wouters1997cells}. It is suggested that radiation fractionation alleviates tumor hypoxia through the phenomenon of re-oxygenation, the process by which hypoxic cells surviving a given radiation fraction become more oxygenated and thus more susceptible to radiation prior to the next fraction \cite{hall2006radiobiology}. In this section, we account for the tumor re-oxygenation effect by explicitly modeling the dependence of tumor radio-sensitivity parameters on the tumor oxygen level. To this end, we first review a mathematical model for calculating oxygen-dependent radio-sensitivity parameters. We then extend the tumor population dynamics introduced in Section \ref{sec: tumorPop} to the case of dynamic radio-sensitivity parameters. Finally, we propose an efficient DP algorithm to solve the revised model.

The common approach to modeling the evolution of radio-sensitivity parameters is to incorporate the concept of the \emph{oxygen enhancement ratio} (OER), which models the dependence of radio-sensitivity parameters on tumor oxygen partial pressure as follows \cite{wouters1997cells,saberian2015theoretical}:
\begin{equation}\label{eq-OER}
\alpha_t=\frac{\alpha_{\max}}{\text{OER}_\alpha}\left(\frac{y_t\times \text{OER}_\alpha+K}{y_t+K} \right), \ \ \beta_t=\frac{\beta_{\max}}{\text{OER}_\beta^2}\left(\frac{y_t\times \text{OER}_\beta+K}{y_t+K} \right) ^2
\end{equation}
In the equations above, $\alpha_{\max}$ and $\beta_{\max}$ are radio-sensitivity parameters under well-oxygenated conditions, and $y_t$ denotes the average tumor oxygen partial pressure at time $t$. Additionally, $\text{OER}_\alpha$, $\text{OER}_\beta$, and $K$ are the fitted parameters of the OER model. To model the evolution of tumor oxygen partial pressure, we consider a spherical tumor, the radius of which, denoted by $r_t$, changes throughout the treatment course as a result of radiation and chemotherapy cell-kill as well as tumor repopulation. In particular, assuming a constant tumor-cell density per unit volume, denoted by $\rho$, we express the radius of the tumor prior to the start of treatment session $t$ as a function of the tumor population size, that is, $r_t = \sqrt[3]{\frac{3X_{t-1}}{4\pi\rho}}$ \cite{wein2000dynamic}. We then assume a simple model in which the average oxygen partial pressure $y_t$ in \eqref{eq-OER} is linearly proportional to the radius of the spherical tumor, that is, $y_t\propto r_t$. This assumption is motivated by clinical studies indicating that large tumors have large hypoxic regions \cite{de1998heterogeneity}. Substituting $X_{t-1}$ from equation \eqref{xt}, we can express the average tumor oxygen partial pressure on treatment session $t$ as 
\begin{equation}\label{eq-oxpress}
y_t= y_{\max}-\iota\times \sqrt[3]{\frac{3X_0}{4\pi\rho}}\exp\left(-\frac{1}{3}\sum_{i=1}^{t-1}\left( \alpha_id_i+\beta_id_i^2+\theta c_i+\psi d_ic_i\right)+\frac{\ln 2}{3\tau_d}(t-T_k)^+ \right) 
\end{equation}
where $y_{\max}$ is the maximum average oxygen partial pressure, usually achieved toward the end of treatment when the tumor radius is sufficiently small, and $\iota$ is the model parameter representing the increase in average oxygen partial pressure per unit reduction in tumor radius. Note that the value $ y_{\max}-\iota\times \sqrt[3]{\frac{3X_0}{4\pi\rho}}$ represents the average oxygen partial pressure for the tumor at the beginning of the therapy, which depends on the initial tumor size and the tumor oxygenation level. Hence, we can use the average tumor oxygenation level of an arbitrary tumor at the beginning and end of treatment to calibrate our model.

Equation (\ref{xt}) describing the tumor population size needs to be updated to account for dynamic tumor radio-sensitivity parameters as follows:
\[X_t=\begin{cases}X_0e^{-\sum_{i=1}^{t}\left( \alpha_i d_i+\beta_i d_i^2+ \theta c_{i}+\psi d_i c_i\right) +\frac{\ln 2}{\tau_d}(t-T_k)^+},& t\le N\\
 X_0 e^{- \sum_{i=1}^{N}(\alpha_i d_i +\beta_i d_i^2+\theta c_{i}+\psi d_i c_i) +\frac{\ln 2}{\tau_d}(t-T_k)^+}, & t> N. \end{cases}\]
This changes the objective function of problem \eqref{eq-obj1}--\eqref{eq-checon1} to 
$$
\hat{f}(\vec{d},\vec{c},\vec{\alpha},\vec{\beta})+\hat{g}\left( \sum_{t=1}^{N}\alpha_td_t,\sum_{t=1}^{N}\beta_td_t^2,\sum_{t=1}^{N}c_t,\sum_{t=1}^{N}d_tc_t\right)
$$\normalsize
where
$$\normalsize
\hat{f}(\vec{d},\vec{c},\vec{\alpha},\vec{\beta})=X_0^\xi \sum_{i=1}^{n}p_ie^{\mu_iT}\sum_{t=0}^{N+1}e^{-\xi\left( \sum_{j=1}^{t-1}(\alpha_j d_j+\beta_j d_j^2+(\theta-\omega_i/\xi) c_j+\psi d_jc_j)-\frac{\ln 2}{\tau_d}(t-T_k)^+\right)-\mu_it-\omega_iC_{\max}}
$$
$$
\hat{g}=\left( \sum_{t=1}^{N}\alpha_td_t,\sum_{t=1}^{N}\beta_td_t^2,\sum_{t=1}^{N}c_t,\sum_{t=1}^{N}d_tc_t\right)=X_0^\xi e^{-\xi\hat{\Gamma}}\sum_{i=1}^{n}p_ie^{\mu_iT}\frac{e^{-(\mu_i-\xi\frac{\ln 2}{\tau_d})(N+1)}-e^{-(\mu_i-\xi\frac{\ln 2}{\tau_d})T}}{\mu_i-\xi\frac{\ln 2}{\tau_d}}
$$
$$
\hat{\Gamma}=\sum_{t=1}^{N} (\alpha_t d_t+\beta_t  d_t^2+\theta c_{t}+\psi d_tc_t)+\frac{\ln 2}{\tau_d}T_k.
$$
To solve the formulation in \eqref{eq-obj1}--\eqref{eq-checon1} with the modified objective function using the DP approach, we need to store two additional state variables, $\hat{U}_t=\sum_{i=1}^{t-1}\alpha_td_t$ and $\hat{V}_t=\sum_{i=1}^{t-1}\beta_td_t^2$. This leads to a 6D state space characterized by $\mathcal{B}_t=\{U_t,V_t,S_t,W_t,\hat{U}_t,\hat{V}_t\}$. The cost-to-go functions $\hat{J}_t: \mathbb{R}^6 \rightarrow \mathbb{R}$ for the new state space are defined as
$$\hat{J}_{t}(\mathcal{B}_t)=\begin{cases}\min_{d_{t},c_t\ge0}\ \ \hat{J}_{t-1}(\mathcal{B}_{t-1})+\hat{L}(d_{t},c_t,\mathcal{B}_{t-1}) ,& 1\le t\le N-1\\
\min_{d_{t},c_t\ge0} \ \ \hat{J}_{t-1}(\mathcal{B}_{t-1})+\hat{g}(\hat{U}_{t-1}+d_t,\hat{V}_{t-1}+d_t^2,C_{\max},W_{t-1}+d_tc_t) ,& t=N\end{cases}$$
\normalsize
where
$$
\hat{L}(d_{t},c_t,\mathcal{B}_{t-1})=X_0^\xi \sum_{i=1}^{n}p_i e^{-\xi\times\hat{\Psi}_{t,i}+\mu_i\cdot(T-t)-\omega_i C_{\max}}
$$\normalsize
$$
\hat{\Psi}_{t,i}= \hat{U}_{t-1}+\alpha_td_{t}+ \hat{V}_{t-1}+\beta_td_{t}^2+(\theta-\omega_i/\xi) (S_{t-1}+c_t)+\psi(W_{t-1}+d_tc_t)-\frac{\ln 2}{\tau_d}(t-T_k)^+.
$$
Tumor radio-sensitivity parameters $\alpha_t$ and $\beta_t$ $(t=1,\ldots,N)$ used in the cost-to-go functions can be computed using equations \eqref{eq-OER}--\eqref{eq-oxpress}.
The dynamic table data structure employed in our DP implementation significantly reduces the computational and space requirements of our DP algorithm with a 6D state space. Details of the DP algorithm and implementation for the 6D state space are provided in Appendix \ref{algorithm-dynamic}.
}
\section{Numerical results} \label{sec: Results}
In this section, we test the performance of our formulation and solution methods on a locally advanced cervical cancer case. We consider the toxicity effects of radiotherapy in three different organs-at-risk(OAR): bladder, small intestine, and rectum \cite{portelance2001intensity}, using the BED model introduced in (\ref{eq-beddef}). A standard fractionated treatment for cervical cancer delivers $45$ Gy to the tumor over five weeks ($25$ working days). The maximum tolerated doses for each organ at risk is computed based on this standard fractionation scheme ($1.8$ Gy $\times$ 25). 

Treatment days comprise a course of external beam radiotherapy and chemotherapy administration, followed by a short rest period lasting $N_r$ days, and low-dose-rate brachytherapy with a dose rate of $R$ Gy per day delivered within $N_b$ days. This treatment sequence is motivated by the combined-modality treatment protocols used in clinical practice for cervical cancer, which often involve adjuvant brachytherapy. Equation \eqref{xt} can be adjusted to account for the additional brachytherapy as follows (see \cite{plataniotis2008use} for more details):{\color{black}
\begin{equation}\label{eq-xtadj}
X_t=\begin{cases}X_0e^{-\sum_{i=1}^{t}\left( \alpha  d_i+\beta  d_i^2+ \theta c_{i}+\psi d_ic_i\right) +\frac{\ln 2}{\tau_d}(t-T_k)^+},& t\le N+N_r\\
X_0 e^{-\sum_{i=1}^{N}\left( \alpha  d_i+\beta  d_i^2+ \theta c_{i}+\psi d_ic_i\right)-e_gR(t-N-N_r)(\alpha_N +2\beta_N R/\sigma) +\frac{\ln 2}{\tau_d}(t-T_k)^+}, & N+N_r<t\le N+N_r+N_b \\ X_0 e^{- \sum_{i=1}^{N}(\alpha d_i +\beta d_i^2+\theta c_{i}+\psi d_ic_i)-e_gRN_b(\alpha_N +2\beta_N R/\sigma) +\frac{\ln 2}{\tau_d}(t-T_k)^+}, & t> N+N_r+N_b \end{cases}
\end{equation}\normalsize}
where  $\sigma$ is the monoexponential repair rate of sublethal damage, and $e_g$ represents the effects of dose gradients around the brachytherapy
sources (see \cite{plataniotis2008use,dale1997calculation}). We assume that low-dose brachytherapy is performed in two days within one week after the completion of pelvic external radiotherapy \cite{plataniotis2008use}. Note that this assumption does not affect the optimal solution, and our result still applies to different brachytherapy schedules.  

The standard chemotherapy agents used in locally advanced cervical tumors are cisplatin, given at low daily doses, and fluorouracil (5-FU), given at high daily doses \cite{keys1999cisplatin,wee2005randomized}. We assume that cisplatin, when delivered at low doses, only augments the effects of radiotherapy through radio-sensitization without any independent cytotoxic effect \cite{britten1996effect,vokes1990concomitant}. Hence, the LQ parameter $\alpha$ in \eqref{eq-xtadj} is substituted with $\alpha+\kappa c_{\text{cis}}$, where $c_{\text{cis}}$ shows the average daily dose of cisplatin, as discussed in Section \ref{sec: tumorPop}. cisplatin is usually administered intravenously at a dose of 40 mg per square meter of body-surface area ($\textrm{mg}/\textrm{m}^2$) per week  \cite{keys1999cisplatin}; therefore, we set the daily dose $c_{\text{cis}}=8  \textrm{ mg}/\textrm{m}^2$. The agent 5-FU administrated at high doses is considered to have an inherent cytotoxic activity acting as a systemic therapeutic agent \cite{seiwert2007}.  The standard concurrent CRT protocol for cervical cancer consists of administrating a 5-FU dose of $1,000 \textrm{ mg}/\textrm{m}^2$ per day for 8 days \cite{peters2000concurrent}. Hence, we set the maximum daily dose of 5-FU to be $c_{max}=1,000 \textrm{ mg}/\textrm{m}^2$ and the total dose to be $C_{\max}=8,000\textrm{ mg}/\textrm{m}^2$.

We set $N$ to be 25 days (five weeks, considering weekends as breaks) since 5-FU is given concurrently with radiotherapy.
We employ the calibration method discussed in \cite{plataniotis2008use} to estimate the values of parameters $\kappa$, $\psi$, and $\theta$ associated with radio-sensitization of cisplatin and 5-FU and additivity of 5-FU, respectively, from clinical trial studies. {\color{black}In particular, to estimate parameter $\kappa$, we use the clinical trial study reported by \cite{keys1999cisplatin} to compare the progression-free survival rate of patients treated with radiotherapy alone, denoted by $\textrm{TCP}_{\textrm{\tiny{RT}}}$, against that of patients treated with radiotherapy plus cisplatin, denoted by $\textrm{TCP}_{\textrm{\tiny{CRT}}}$. This yields the following estimation for parameter $\kappa$ (see \cite{plataniotis2008use} for more details):
$$
\kappa = -\frac{\ln(\ln(\textrm{TCP}_{\textrm{\tiny{CRT}}})/\ln(\textrm{TCP}_{\textrm{\tiny{RT}}}))}{c_{\text{cis}}\times\sum_{t=1}^{N}d_t}.\ \ 
$$
Similarly, we compare the progression-free survival rates of radiotherapy alone against radiotherapy plus 5-FU regimens reported by two different clinical studies to estimate parameters $\theta$ and $\psi$ \cite{peters2000concurrent,eifel2004pelvic}. This yields the following two equations for estimating parameters $\theta$ and $\psi$:
$$
\theta \sum_{i=1}^{N} c^{(m)}_{i}+\psi \sum_{i=1}^{N}d^{(m)}_{i}\times c^{(m)}_{i}=-\ln\left(\frac{\ln \left(\textrm{TCP}^{(m)}_{\textrm{\tiny{CRT}}}\right)}{\ln \left(\textrm{TCP}^{(m)}_{\textrm{\tiny{RT}}}\right)} \right) \qquad m=1,2.
$$
in which the regimens used in the two studies are indexed by $m=1,2$.}

All model parameters are summarized in Table \ref{table-data}. {\color{black}Based on the survival rates reported in \cite{peters2000concurrent} and \cite{eifel2004pelvic}, we estimated an insignificant value for parameter $\psi$ (i.e., $\psi\approx 0$). Note that $\psi$ is associated with 5-FU and does not include the radio-sensitization effect of cisplatin. To study the structure of the optimal solution in the presence of radio-sensitization, we use the same radio-sensitization parameter value used for cisplatin (i.e., $\psi=\kappa$).} To fully explore how the structure of the optimal solution relates to the optimal conventional fractionation regimes in different scenarios, we consider a wide range of values for the parameters $\left[\alpha/\beta\right]$. There is a significant amount of debate as to whether or not the LQ model applies to large doses \cite{kirkpatrick2008linear,brenner2008linear}. In view of this, we set $d_{\max}$ to be 5 Gy.

\begin{table}[ht!]
	\begin{center}
    	\begin{tabular}{ | l | l | l | l | }
    	\hline
    	Structure & Parameters & Values  & Source \\ \hline
    	\multirow{8}{*}{Cervical Tumor} & $\alpha$ & $0.43$ Gy$^{-1}$ &  \cite{wigg2001applied}  \\ 
	    & $\left[\alpha/\beta\right]$ & \{3,12,20\} Gy &   \\ 
	    & $\kappa$ & $3.21\times 10^{-3} \textrm{ m}^2/(\textrm{mg} \times \textrm{Gy})$ & \cite{keys1999cisplatin}  \\ 
	    & $\theta$ & $7.15\times 10^{-5} \textrm{ m}^2/\textrm{mg}$ & \cite{peters2000concurrent,eifel2004pelvic}  \\ 
	    & $\psi$ & $\{0,3.21\times10^{-3}\}  \textrm{ m}^2/(\textrm{mg} \times \textrm{Gy})$ & \cite{keys1999cisplatin,peters2000concurrent,eifel2004pelvic}  \\ 
	    & $R$ & 20 Gy/day &  \cite{plataniotis2008use}  \\
	    & $e_g$ & 1.16 &  \cite{plataniotis2008use} \\
	    & $\sigma$ & 12 day$^{-1}$ &  \cite{plataniotis2008use} \\
	    & $\tau_d$ & $4.5$ days &  \cite{wang20062708}  \\
	    & $T_k$ & $21$ days &  \cite{plataniotis2008use}  \\
	    & $X_0$ & $10^{9}$ cells &  \cite{plataniotis2008use}  \\  
	    & $\iota$ & $10.48$ mmHg &  \cite{lartigau1998variations} \\
	    & $y_{\max}$ & $26$ mmHg &  \cite{lartigau1998variations}  \\\hline
	    \multirow{2}{*}{Bladder} & $\left[\alpha/\beta\right]$ & 2.00 &  \cite{wigg2001applied}  \\ 
	    & $\gamma$ & 60.48\% & \cite{portelance2001intensity}\\ \hline
	    \multirow{2}{*}{Small Intestine} & $\left[\alpha/\beta\right]$ & 8.00 &  \cite{joiner2009basic} \\ 
	    & $\gamma$ & 34.24\% & \cite{portelance2001intensity}\\ \hline
	    \multirow{2}{*}{Rectum} & $\left[\alpha/\beta\right]$ & 5.00 &  \cite{wigg2001applied}  \\ 
	    & $\gamma$ & 46.37\% & \cite{portelance2001intensity}\\ \hline
    	\end{tabular}
    	\caption {Tumor and normal tissue parameters }
    	\label{table-data}
	\end{center}
\end{table}

Distribution of the most common sites of cervical cancer metastases is outlined in Table \ref{table-met} (see \cite{carlson1967distant}). Metastases often grow at the same rate as the primary tumor \cite{friberg1997growth}; hence, it is natural to assume that $\mu_1=\dots=\mu_n=\frac{\ln 2}{\tau_d}$. Although distant metastases may appear any time after treatment, approximately 88\% of multi-organ metastases in cervical cancer occur within three years of the conclusion of treatment \cite{carlson1967distant}. Hence, we set parameter $T$ to three years. Parameter $\omega_i$ represents the reduction in metastatic cell growth rate at site $i$ in relation to the chemotherapy dose administrated and has a unit of $\textrm{mg}/\textrm{m}^2$ per day. There is a lack of reliable clinical data on the rate of chemotherapy-induced cell-kill in metastatic lesions. Therefore, to estimate $\omega_i$, we compare it to the drug's cell-kill effect in the primary tumor. More specifically, a chemotherapy dose of $c_t\;\textrm{mg}/\textrm{m}^2$ on day $t$ reduces the primary tumor cell population by a factor of $e^{-\theta c_t}$, in which $\theta$ has a $\textrm{m}^2/\textrm{mg}$ unit (see equation (\ref{xt})). It also reduces the metastatic cell population 24 hours after delivering the chemotherapy at site $i$ by a factor of $e^{- \omega_i c_t}$, and thus $\omega_i$ has a $\textrm{m}^2/\textrm{mg}$ per day unit (see equation (\ref{metgrowth})). Comparing the two cell-kill factors in the primary tumor and metastatic site $i$, we set the cell-kill parameter to $\omega_i=\zeta_i \theta$, in which $\zeta_i$ (with unit $\textrm{day}^{-1}$) is a positive constant depending on the drug's concentration and sensitivity of metastatic cells to chemotherapy at the $i$th site. The constant $\zeta_i =1$ represents a similar drug's concentration and cell-kill in both the primary tumor and the metastatic lesion at site $i$. For other parameters in our model, unless stated otherwise, we set $\xi=\frac{2}{3}$ and $\zeta_i=1$, and use the values specified in Table \ref{table: p} for $p_i\;(i=1,...,n)$.

\begin{table}[ht!] 
	\begin{center}
    	\begin{tabular}{ | l | l | l | l | l | }
    	\hline
    	Parameters & Nodes & Lung  & Bone & Abdomen \\ \hline
    	$p_i$ & 33.3\% & 40.30\% &  18.11\% & 8.29\%  \\ \hline
    	\end{tabular}
    	\caption {\label{table: p} Metastasis site probabilities based on data presented in \cite{carlson1967distant}}
    	\label{table-met}
	\end{center}
\end{table}

For numerical implementation of our DP algorithm, when there is no radio-sensitization and either of the conditions stated in Theorem \ref{theoremchemo} is satisfied (2D state space), we use a discretization step of 0.1 Gy for the radiation dose fractions. {\color{black}However, the DP algorithm is computationally more expensive when applied to the cases of radio-sensitizers (4D state space) or dynamic tumor radio-sensitivity parameters (6D state space). Hence, to ease the computational burden for those cases, we use discretization steps of $1 \textrm{ Gy}$ and $500\textrm{ mg}/\textrm{m}^2$ for radiation and chemotherapy dose fractions, respectively.}

\subsection{{\color{black}Optimal CRT regimens under static tumor radio-sensitivity parameters}}
Figure \ref{fig-optimalchemo} shows the optimal chemotherapy fractionation schedule {\color{black}assuming no radio-sensitization ($\psi=0$)} with identical chemotherapy cell-kill parameters at distant metastatic sites ($\zeta_1=\dots=\zeta_n$). In particular, if $\zeta_i\;(i=1,\ldots,n)$ is equal at all distant metastatic sites or only a single metastatic site is considered, then either of the conditions stated in Theorem \ref{theoremchemo} will always be satisfied. In that case, the optimal CRT regimen will deliver $c_{\max}\; \left(\textrm{mg}/\textrm{m}^2\right)$ at consecutive treatment sessions concentrated at either the beginning or the end of the planning horizon.

\begin{figure}[h]

\subfloat[]{\includegraphics[width = 75mm]{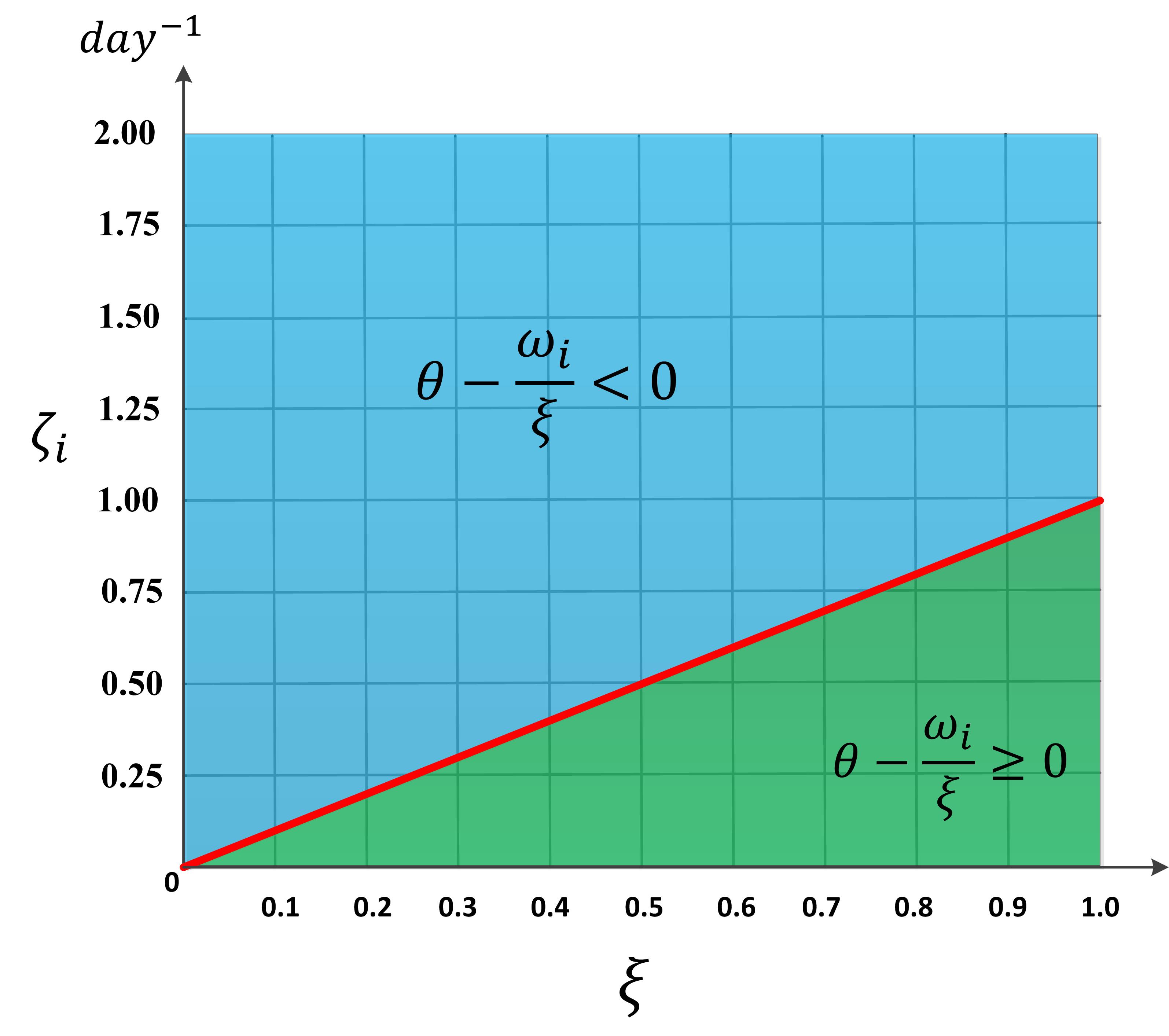}} 
\subfloat[]{\includegraphics[width = 90mm]{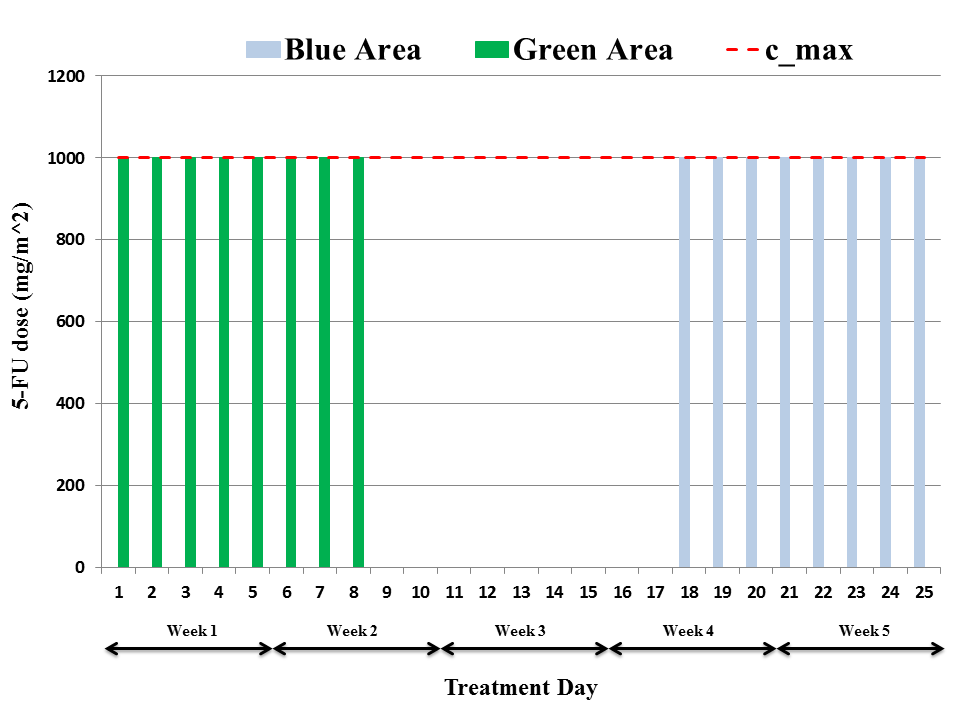}}  

\caption{(a) Schematic of optimal regimens for chemotherapy when assuming same $\omega_i$ for all distant metastasis sites {\color{black}and no radio-sensitivity effect.} Green and blue regions illustrate whether it is optimal to immediately initiate chemotherapy or postpone it until the end of the planning horizon, respectively. (b) Optimal chemotherapy fractionation schedule associated with each region displayed in Figure \ref{fig-optimalchemo}(a) for planning horizon of five weeks (25 days).}
\label{fig-optimalchemo}
\end{figure}

Figure \ref{fig-optimalradio} plots the optimal radiotherapy fractionation schedule for three different values of $\left[\alpha/\beta\right]=\{3,12,20\}$ and {\color{black}$\varrho=\{70\%,98\%\}$}. We observe that minimization of the metastatic-cell population in the absence of radio-sensitization leads to a hypo-fractionated schedule with large initial doses that taper off quickly, regardless of the values for $\left[\alpha/\beta\right]$ and $\min\{\left[\alpha/\beta\right]_j/\gamma_j\}_{j=1,\dots,M}$. If $\left[\alpha/\beta\right]$ is sufficiently large and {\color{black}$\varrho$} is close to 1, then in order to satisfy inequality \eqref{eq-bedtum}, the optimal schedule tends to be a semi-hypo-fractionation schedule with large initial doses that taper off slowly. An extensive sensitivity analysis of the radiotherapy optimal solution to model parameters ($T$, $\xi$, $N$, $T_k$) (not reported here) shows that the radiotherapy optimal schedule is robust to perturbations in these parameters and only depends on $\left[\alpha/\beta\right]$, $\left\{\left[\alpha/\beta\right]_i/\gamma_i\right\}_{i=1,\dots,M}$ and {\color{black}$\varrho$}. To study the structure of the optimal schedules in the absence of radio-sensitization and when none of the conditions stated in Theorem \ref{theoremchemo} is satisfied, we use our DP approach with a 3D state space to solve the formulation for a wide range of values of $\xi$ and $p_i$. We observe that the optimal radiotherapy schedule is determined independently of $\xi$, $\theta$, $p_i$, and $\omega_i$ and follows the same pattern as displayed in Figure \ref{fig-optimalradio} (see black arrows in Figures \ref{fig-chemosensitivity}(a) and \ref{fig-chemosensitivity}(b)).

\begin{figure}

\subfloat[{\color{black}$\varrho=70\%$}]{\includegraphics[width = 90mm]{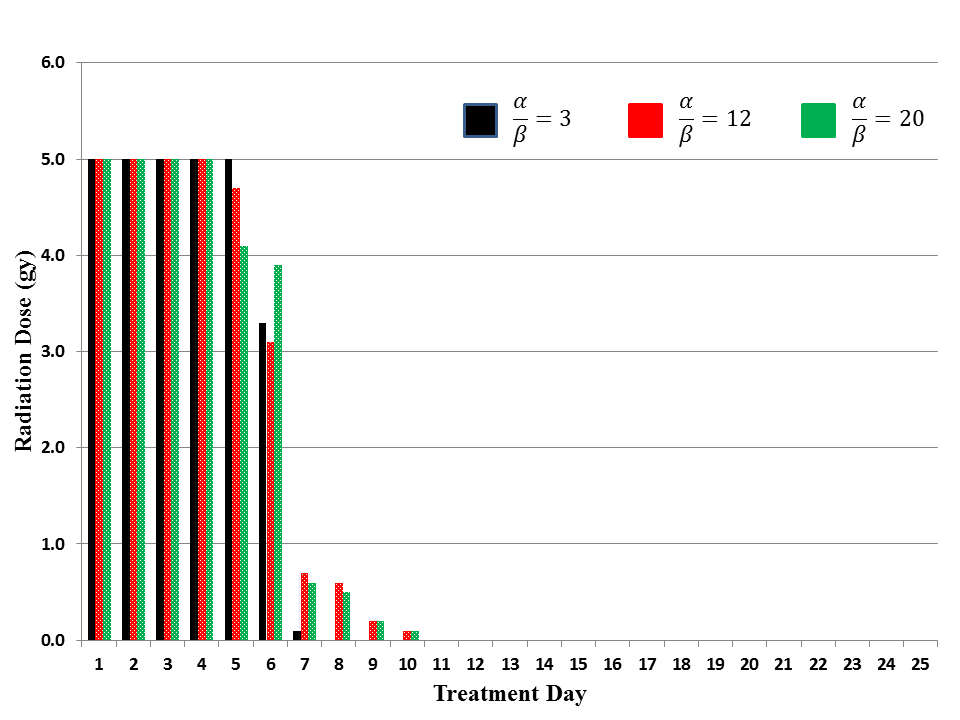}} 
\subfloat[{\color{black}$\varrho=98\%$}]{\includegraphics[width = 90mm]{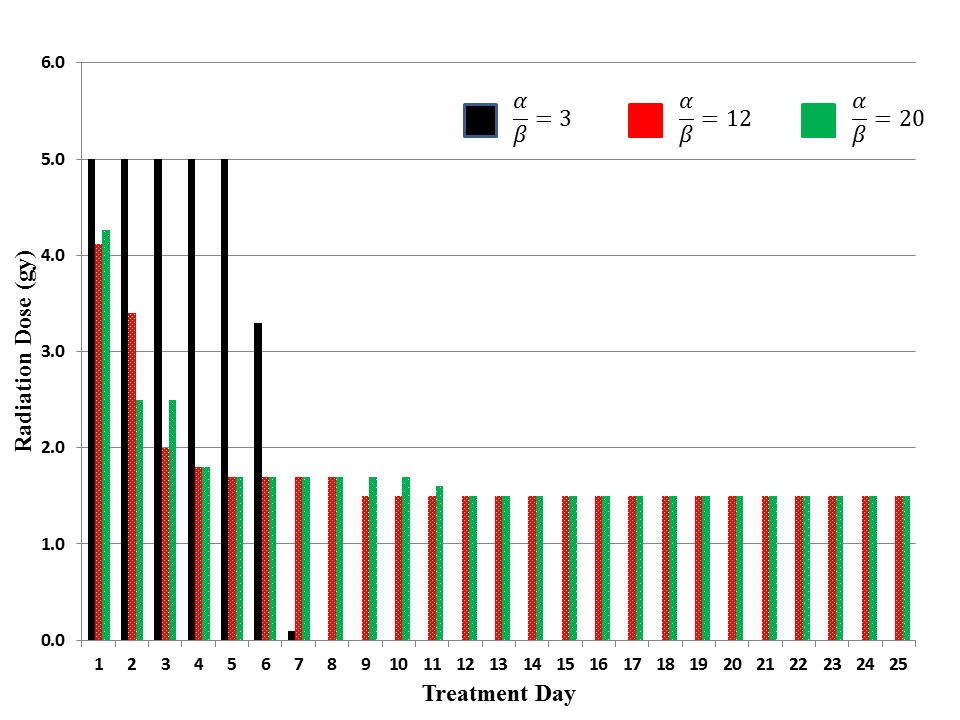}}  

\caption{Optimal radiotherapy fractionation schedule for different values of $\left[\alpha/\beta\right]$ {\color{black}in the absence of radio-sensitivity effect}. Black bars in plots (a) and (b) indicate that when $\left[\alpha/\beta \right]\le \min_j\{ \left[\alpha/\beta \right]_j/\gamma_j\}$, a hypo-fractionated schedule minimizes both tumor cell population and expected metastatic cell population. For larger values of $\left[\alpha/\beta\right]$, i.e., $\left[\alpha/\beta\right]> \min_j\{\left[\alpha/\beta\right]_j/\gamma_j\}$, where an equal-dosage routine (hyper-fractionated schedule) minimizes the number of tumor cells at the conclusion of therapy, a hypo-fractionated schedule is still the best solution to minimizing the metastatic cell population. By increasing the parameter {\color{black}$\varrho$} for tumors with large values of $\left[\alpha/\beta\right]$, in order to satisfy the BED constraint stated in \eqref{eq-bedtum}, we observe that the resulting fractionation {color{black}schedule} is a semi-hypo-fractionated structure with large initial doses that taper off slowly.}
\label{fig-optimalradio}
\end{figure}

Figure \ref{fig-chemosensitivity} illustrates optimal chemotherapy regimens for different values of $p_i$ and $\xi$ when reducing $\zeta$ of the lung to one-third for fixed values of $\left[\alpha/\beta\right]=12$ and $\varrho=70\%$. By reducing the value of $\omega$ in the lung, we ensure that a hypo-fractionated chemotherapy regimen concentrated at the beginning of therapy minimizes the metastasis population in the lung, whereas a hypo-fractionated chemotherapy regimen concentrated at the end of the planning horizon minimizes the metastasis population at three other distant metastatic sites, i.e., nodes, bone, and abdomen. Figure \ref{fig-chemosensitivity}(a) shows that {\color{black}for the case of negligible radio-sensitization,} if the probability of metastasis occurrence in the lung is small (large), then the optimal chemotherapy regimen is dominated by the schedule that minimizes expected metastatic population at the other three sites (lung), i.e., a hypo-fractionated chemotherapy regimen concentrated at the end (beginning) of the planning horizon. 

Figure \ref{fig-chemosensitivity}(b) illustrates how parameter $\xi$ changes the optimal solution in the case where conditions stated in Theorem \ref{theoremchemo} do not hold, e.g., $\xi\in(0.3,1)$. In particular, we observe that for high (low) values of $\xi$, the overall structure of an optimal solution is dominated by the schedule that minimizes the expected metastatic population in the lung (nodes, bones, and abdomen). {\color{black}This is due to the fact that as we increase parameter $\xi$, there is less incentive to schedule a chemotherapy regimen concentrating toward the end of the treatment. Hence, the optimal solution favors scheduling chemotherapy sessions at the beginning of the treatment. Figures \ref{fig-chemosensitivity}(c) and \ref{fig-chemosensitivity}(d) show that if radio-sensitization is present ($\psi>0$), then the optimal solution escalates the radiation dose in those fractions in which chemotherapy is administered. In particular, the optimal regimen shifts several radiotherapy fractions toward the end of the treatment, or schedules several chemotherapy doses at the beginning of therapy, to benefit from the radio-sensitization mechanism. Additionally, in the case of radio-sensitization, the optimal regimen is not sensitive to parameter $p_{\text{lung}}$. Based on this result, we observe that radio-sensitization favors split-course schedules in which radiotherapy and chemotherapy are administered concurrently, particularly when the metastatic dissemination has a larger rate and/or higher success probability.}

\begin{figure}
\centering
\includegraphics[width=170mm]{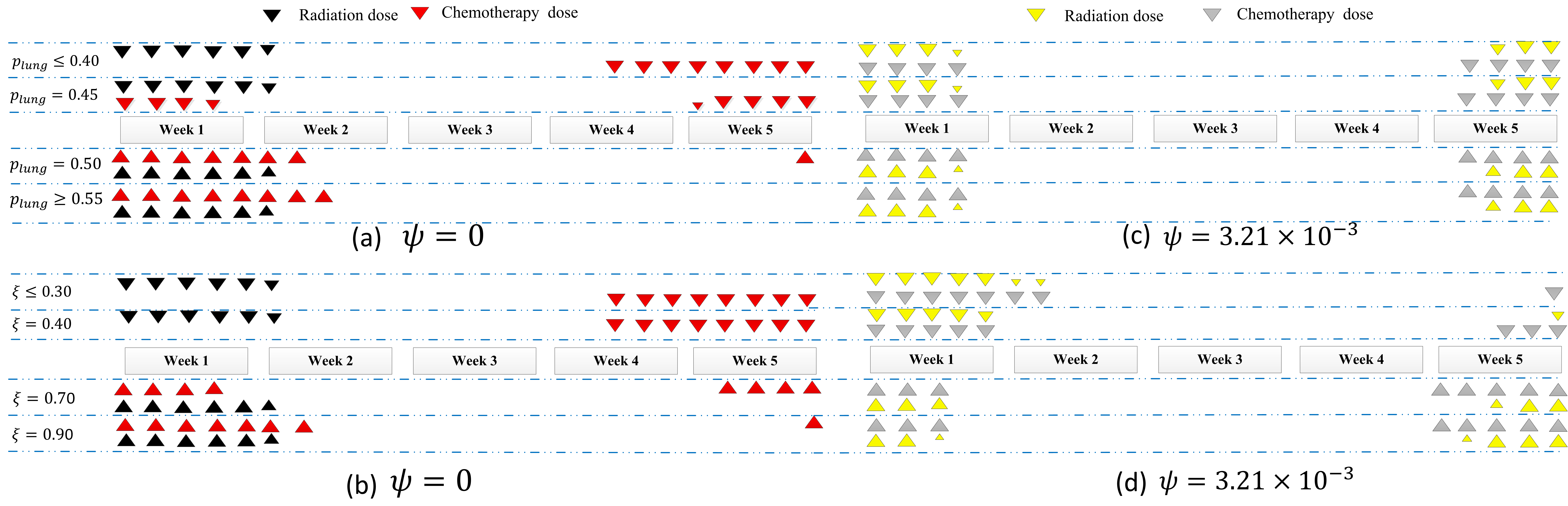}
\caption{{\color{black}Optimal chemotherapy regimen when conditions stated in Theorem \ref{theoremchemo} do not hold. For these examples, we assume that $\omega_{\text{nodes}}=\omega_{\text{bone}}=\omega_{\text{abdomen}}=\theta$, $\omega_{\text{lung}}=0.3\times\theta$, $\left[\alpha/\beta\right]=12$, $\varrho=70\%$ and $\psi=\{0,3.21\times 10^{-3}\}$. (a) and (c) Optimal chemotherapy regimen for different values of $p_{\text{lung}}$. (b) and (d) Optimal chemotherapy regimen for different values of $\xi$. Red and gray arrows represent chemotherapy doses, whereas black and yellow arrows represent radiotherapy doses. The arrow position represents the time of dose during the Monday-to-Friday treatment window. The size of the arrow correlates with the size of the 5-FU or radiation dose, given that the maximum daily dose cannot exceed $1,000\;\textrm{m}^2/\textrm{mg}$ and $5$ Gy for chemotherapy and radiation, respectively.}}
\label{fig-chemosensitivity}
\end{figure}

We consider the relative effectiveness of an optimized schedule versus a standard schedule (delivering 45 Gy of radiotherapy with 1.8 Gy fractions per day in five weeks and $8,000\; \textrm{m}^2/\textrm{mg}$ of 5-FU with  $1,000\; \textrm{m}^2/\textrm{mg}$ per day delivered on Tuesday, Wednesday, Thursday, and Friday of the first and fourth weeks \cite{peters2000concurrent}). In particular, we denote the approximate metastatic population at $n$ sites under the optimized schedule when setting {\color{black}$\varrho=x\%$} in \eqref{eq-bedtum} by $R_{\text{opt}}^x$ and the metastatic population under a standard uniform fractionation by $R_{\text{std}}$. Then we denote the BED delivered to the primary tumor under optimized and standard schedules by $\textrm{BED}_{\textrm{opt}}^x$ and $\textrm{BED}_{\textrm{std}}$, respectively. The ratios $(R_{\textrm{std}}-R_{\textrm{opt}}^x)/R_{\textrm{std}}$ and $(\textrm{BED}_{\textrm{std}}-\textrm{BED}_{\textrm{opt}}^x)/\textrm{BED}_{\textrm{std}}$ will give us a measure of the predicted relative reduction in metastasis population and tumor BED, respectively, associated with using the optimized schedule with {\color{black}$\varrho=x\%$} instead of the standard schedule. The results illustrated in Figure \ref{fig-improv} show that the choice of {\color{black}$\varrho < 100\%$} can lead to a significant reduction in the metastasis population for all values of $\left[\alpha/\beta\right]$ and $\xi$. Note that for $\left[\alpha/\beta\right]\ge (\min_i{\left[\alpha/\beta\right]_i/\gamma_i}=3.3)$, the standard regimen is indeed the optimal schedule for the conventional fractionation problem that maximizes TCP \cite{badri2015optimal,badriglioma}, whereas a hypo-fractionated radiotherapy schedule minimizes the metastatic population size. Hence, Figure \ref{fig-improv} illustrates the trade-off between the two conflicting objectives: (i) minimizing metastatic population size and (ii) maximizing tumor TCP for different $\left[\alpha/\beta\right]$ and $\xi$ values. One can observe a diminishing return in the reduction of the metastatic population. In particular, the fractionation solution obtained by setting {\color{black}$1-\varrho=4\%$} seems to yield an interesting trade-off between the two objectives beyond which allowing for larger tumor BED reductions, i.e., using smaller {\color{black}$\varrho=x\%$}, does not lead to any significant gain in metastatic-population reduction (with the exception of very small values of $\xi$). Last, for the tumor $\left[\alpha/\beta\right] < 3.3$, the two objectives (i) and (ii) are not of a conflicting nature since a hypo-fractionated regimen is desired by both objectives. 

\begin{figure}
\centering
\subfloat[{Varying $\left[\alpha/\beta\right]$, $\xi=\frac{2}{3}$}]{\includegraphics[width = 85mm]{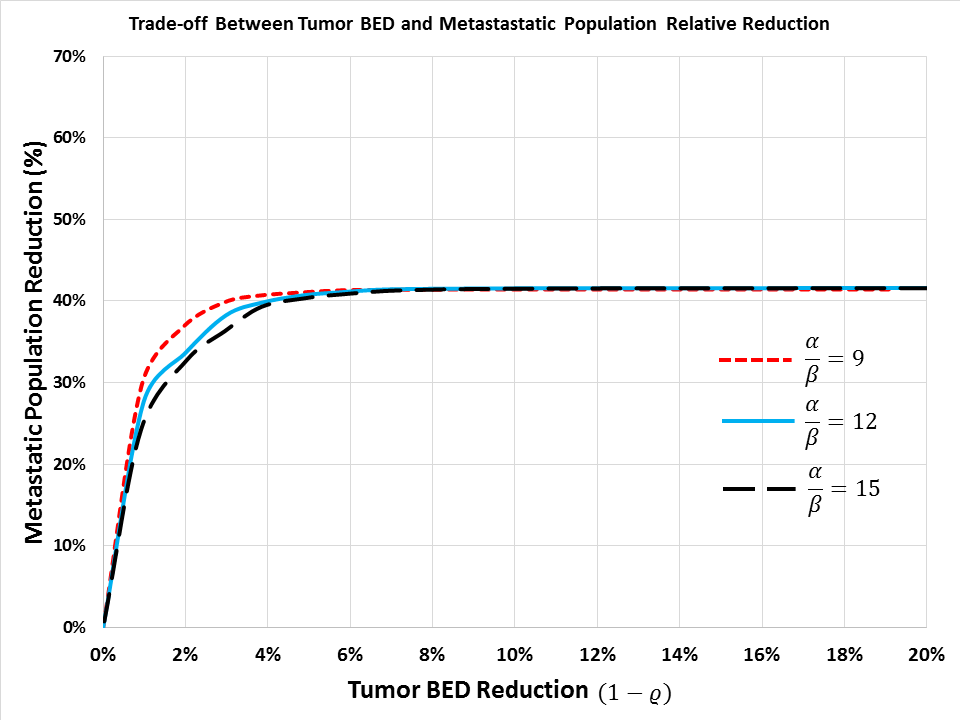}} 
\subfloat[{Varying $\xi$, $\left[\alpha/\beta\right]=12$}]{\includegraphics[width = 85mm]{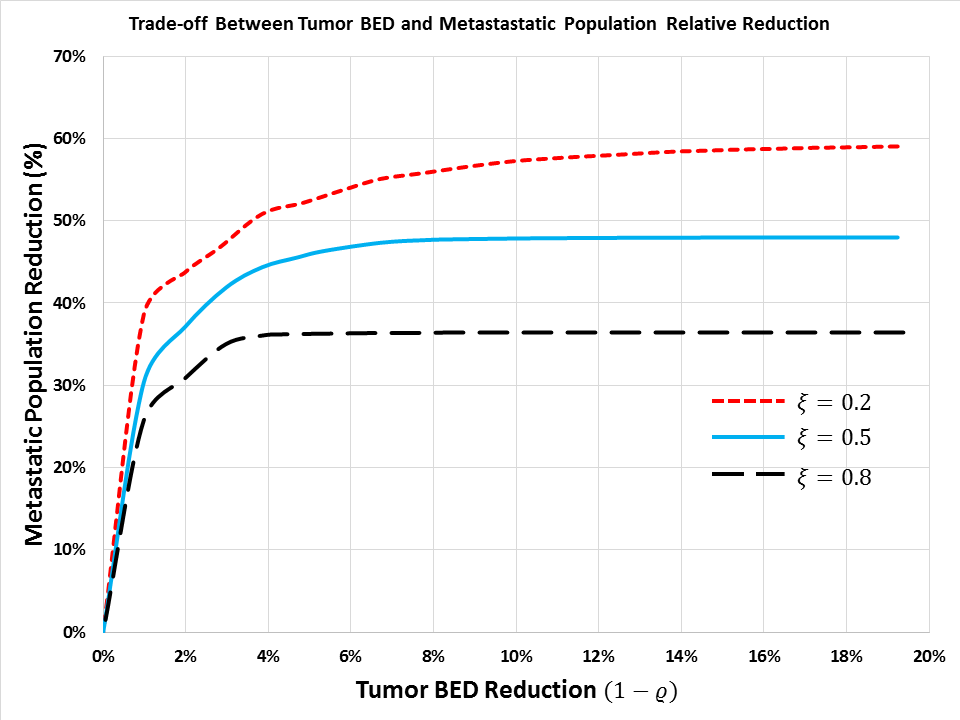}}  
\caption{Trade-off between reduction in metastatic population size and tumor BED relative to optimal conventional protocols {\color{black}assuming $\psi=0$}. (a) Maximum metastatic population reduction for all values of $\left[\alpha/\beta\right]$ examined in this plot is $42\%$ (obtained at {\color{black}$\varrho=0\%$}). (b) Maximum metastatic population reduction (obtained at {\color{black}$\varrho=0\%$}) for $\xi=0.2$, $0.5$, and $0.8$ examined in this plot is $59\%$, $48\%$, and $36\%$, respectively.}
\label{fig-improv}
\end{figure}
{\color{black}
\subsection{Optimal CRT regimens under dynamic tumor radio-sensitivity parameters}
Figure \ref{fig-dynamic} illustrates optimal CRT regimens for the case in which the tumor radio-sensitivity parameters $\alpha$ and $\beta$ change throughout the treatment course due to tumor re-oxygenation, as shown by equation \eqref{eq-OER}. Under dynamic radio-sensitivity parameters, it is optimal to immediately start chemotherapy treatment. Additionally, a careful comparison of Figures \ref{fig-chemosensitivity} and \ref{fig-dynamic} reveals that in contrast to static radio-sensitivity parameters, the structure of the optimal chemotherapy schedule is robust to changes in model parameters $p_{lung}$ and $\xi$. With respect to the radiotherapy schedule, it is optimal to deliver a few large fractions at the beginning of the treatment course followed by several smaller radiation doses in the second and third weeks of therapy. We believe this can be explained by the increasing order of $\alpha_t$ and $\beta_t$ values over the treatment course. In fact, there is an exponential increase in $\alpha_t$ and $\beta_t$ at the beginning of the treatment course, reaching maximum possible values after delivering a few large fractions of chemotherapy and radiation (within three or four sessions). Hence, in the optimal schedule, a few large radiation and chemotherapy fractions are administered at the beginning of the therapy to reduce the tumor size significantly. This leads to a well-oxygenated tumor at subsequent radiation fractions, rendering the remaining tumor population more susceptible to radiation. Thus, to benefit from this phenomenon, the optimal regimen administers a set of small radiation fractions during later sessions.

\begin{figure}
\centering
\includegraphics[width=120mm]{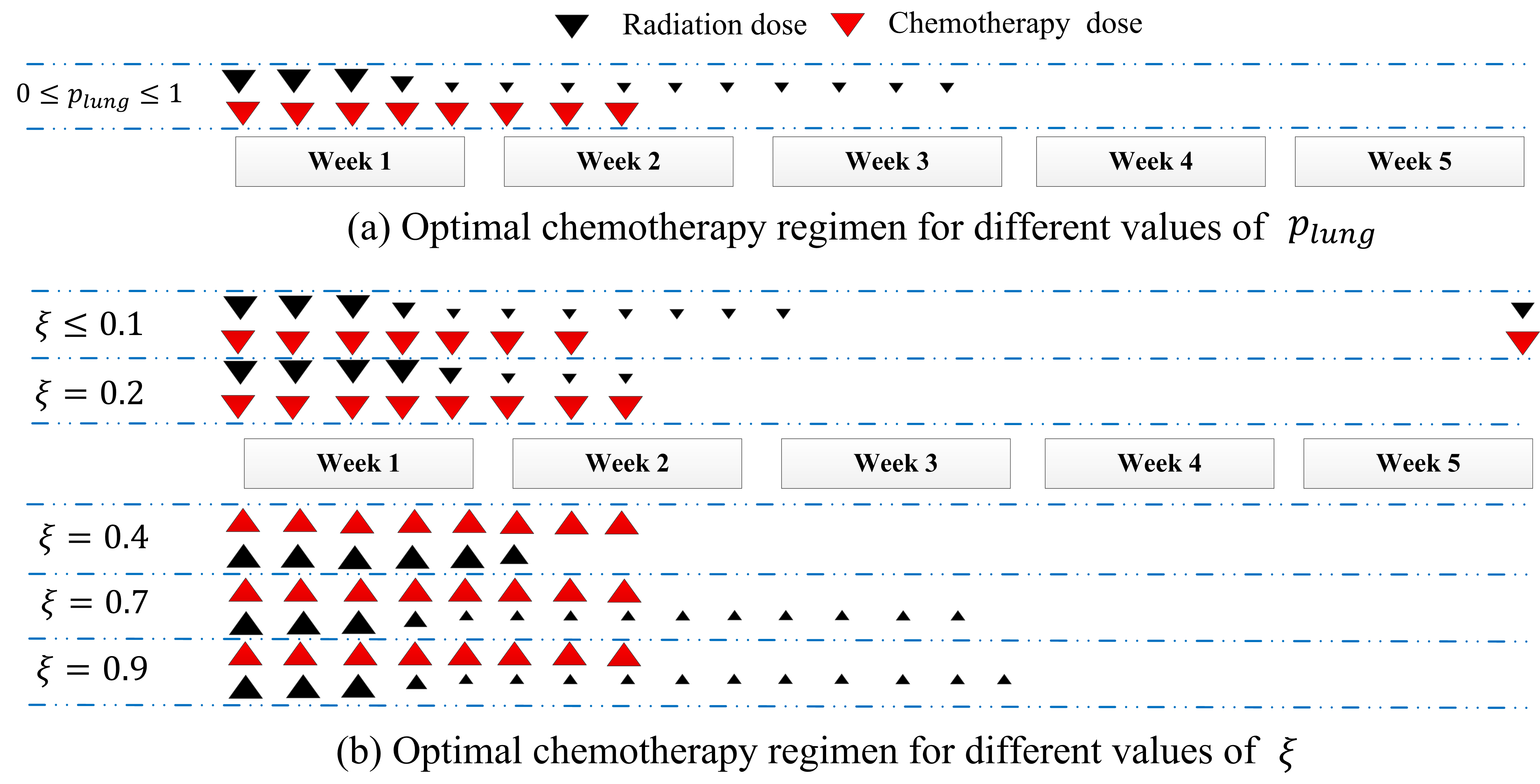}
\caption{ {\color{black}Optimal CRT regimens with dynamic radio-sensitivity parameters, assuming that $\omega_{\text{nodes}}=\omega_{\text{bone}}=\omega_{\text{abdomen}}=\theta$, $\omega_{\text{lung}}=0.3\times\theta$, $\alpha_{\max}=0.45$, $\left[\alpha_{\max}/\beta_{\max}\right]=12$, $\varrho=70\%$, $\psi=3.21\times 10^{-3}$, $\rho=10^8$ \cite{wein2000dynamic}, $OER_{\alpha}=2.5$, $OER_{\beta}=3$ and $K=3.28$ \cite{wouters1997cells}.}}
\label{fig-dynamic}
\end{figure}
}
\section{Conclusion} \label{sec: conclusion}
In previous work that considers optimal fractionation of chemotherapy and radiotherapy, the goal has been to maximize the probability of controlling the primary tumor, i.e., local control. Based on the observation that the majority of cancer fatalities are due to metastatic {\color{black}disease \cite{gupta2006cancer}}, we consider an alternative objective: design CRT fractionation schedules that minimize the metastatic population over a sufficiently long window of time. 
The current work is a significant extension of our previous work  \cite{badri2015minimizing}, which incorporates multi-site metastatic disease as well as the effect of chemotherapy through additivity, radio-sensitization, and spatial-cooperation mechanisms. {\color{black}We also account for tumor re-oxygenation during the course of CRT treatments.} We model the metastasis population as a multi-type non-homogeneous branching process, where each successful metastatic cell is able to colonize a new tumor at one of several potential distant sites with a known probability. We are able to derive closed-form solutions to optimal chemotherapy regimens and prove an interesting structure of optimal radiotherapy schedules under easily verified conditions. We numerically solve this optimization problem using a DP approach.

The resulting optimal schedules have an interesting structure that is quite different from that observed in the traditional optimal fractionation problem, where the goal is to minimize the local tumor population at the conclusion of therapy. In the traditional optimal fractionation problem, it was shown that if the tumor $\left[\alpha/\beta\right]$ ratio is smaller (bigger) than the effective $\left[\alpha/\beta\right]$ ratio of all OAR, then a hypo-fractionated (hyper-fractionated) schedule is optimal \cite{mizuta2012mathematical}. {\color{black}For the case of CRT with only additivity and spatial cooperation mechanisms,} we proved that the optimal radiotherapy schedule has a non-increasing structure and that it is optimal to immediately start radiotherapy treatment. Also, we numerically observed that, a hypo-fractionated radiotherapy regimen with large initial doses that taper off quickly, minimizes the metastatic population, regardless of tumor and normal-tissue $\left[\alpha/\beta\right]$ ratios. This result is partially consistent with our previous work, where the benefit of hypo-fractionation was observed for low values of the tumor $\left[\alpha/\beta\right]$ ratio and high values of the tumor $\left[\alpha/\beta\right]$ ratio, if the length of time for which we evaluated metastatic risk is short \cite{badri2015minimizing}. In the current work, we observed that if we consider metastases as actively growing tumor cell colonies and model them as a non-homogeneous branching process, then regardless of evaluation period and magnitude of tumor $\left[\alpha/\beta\right]$ ratio, our results show that a hypo-fractionated schedule is nearly always optimal. We proved that it is always optimal to deliver the maximum drug concentration allowed during the course of therapy, $C_{\max}$. 

In the setting of no radio-sensitization ($\psi=0$) and static radio-sensitivity parameters we established results on the structure of the optimal chemotherapy schedule. When the value $\theta\xi$, where $\theta$ is the chemotherapy-induced cell-kill at the primary site and $\xi$ is the fractal dimension of the primary tumor cells capable of metastasizing, is larger than the chemotherapy-induced cell-kill at all distant sites ($\max_{i=1,\dots,n}\{\omega_i\}$), the optimal chemotherapy fractionation regimen suggests delivering $C_{\max}$ in consecutive days starting from the first day of the planning horizon and administering the maximum tolerable daily dose at each fraction (concurrent regimen). However, under condition $\theta\xi<\min_{i=1,\dots,n}\{\omega_i\}$, it is optimal to postpone delivering chemotherapy (or part of the chemotherapy along with few radiation doses in the presence of radio-sensitization) until the end of treatment (adjuvant regimen). We observed that when these two conditions are not satisfied, the optimal regimen is delivering a portion of the total chemotherapeutic agents at the beginning of the planning horizon and administrating the remaining amount at the end of the therapy. This portion is a function of $\theta$, $\xi$, the probability that a successful metastatic cell colonizes at a specific distant site and $\omega_i$ for each site.

{\color{black}Our sensitivity analysis for cervical tumors reveals that chemotherapeutic agents with no radio-sensitization property do not change the optimal radiation fractionation regimens, and vice-versa. However in the case of active radio-sensitization effect, optimal regimens use an escalated radiation dose on treatment sessions with chemotherapy administrations to benefit from the radio-sensitization mechanism. This can be compared to an earlier work by Salari et al.\ where it was shown that radio-sensitizers may alter the optimal radiation fractionation regimens in a similar fashion \cite{salari2013mathematical}. {\color{black} The benefits of accelerated hypo-fractionation schedules was established numerically, however radio-sensitizers may postpone delivering few radiotherapy sessions until the end of therapy.} Moreover, we consider an extended version of the model with dynamic tumor radio-sensitivity parameters due to tumor re-oxygenation throughout the treatment course.} {\color{black} This leads to optimal CRT regimens in which the maximum chemotherapy tolerable dose are administered consecutively at the beginning of treatment; and few large radiation fractions are administered during the first week of treatment, followed by small doses of radiation in the second and third weeks of therapy.}

Interestingly, previous clinical trials (see e.g., \cite{green2001survival} for a review) show the benefit of CRT on overall and progression-free survival, and local and distant control in patients with cervical cancer. In particular, in clinical trials, it was observed that there is a significant reduction in the rate of distant metastases in patients diagnosed with cervical cancer treated with both platinum and non-platinum chemotherapy. This reduction was achieved with a short course of chemotherapy combined with local treatment. However, there is presently no evidence that neoadjuvant CRT reduces the incidence of distant metastases \cite{green2001survival}, which is consistent with our result discussed in Remark 1 of Section 3.

Our goal here is not to recommend alternative clinical practice but to assist clinicians with hypothesis generation to design novel CRT fractionation schemes that can be tested in clinical trials. Our numerical results suggest that using optimal schedules instead of standard regimens has the potential of reducing the metastatic population by more than 40\%, and even for some cases where the tumor $\left[\alpha/\beta\right]$ ratio is small, we can improve the tumor BED as well. 

This paper is an initial step toward developing a new framework to incorporate the risk of metastatic disease into CRT fractionation decisions. What follows is a discussion of some limitations in our framework, which are partly imposed by the lack of clinically established models of the underlying biological processes. First, it is assumed that the chemotherapeutic drugs administered at previous treatment fractions do not carry over to the current fraction. However, this may not necessarily be a valid assumption. Therefore, an important extension of this work will be to incorporate the effects of previously administered drugs to subsequent fractions. A possible solution to this could be modeling the effective dose at each fraction as a weighted moving average of the current and previously administrated doses. {\color{black}Second, the process of metastatic formation and growth at distant sites was modeled using a non-homogeneous Poisson process and a multi-type branching process, respectively, which were adopted from previous studies. Those models assume that the metastatic cells act independently from each other for analytical tractability. However, metastatic lesions may compete over limited oxygen and nutrient supplies required for growth, rendering the independence assumption unfavorable. Last, a simple linear model was used to describe the relationship between the tumor radius and its average oxygen partial pressure. In particular, the model assumes a dynamic and yet uniform oxygen partial pressure that changes linearly as the tumor radius shrinks. The assumptions of the uniformity of the oxygen pressure and its linear relationship to tumor radius may be overly simplistic; however, this linear model is mainly intended to mimic the increasing behavior of tumor oxygen pressure during therapy, as suggested by some clinical studies \cite{lartigau1998variations}.}

This work considers the problem of finding radiation and chemotherapy schedules that optimally minimize the total metastatic populations. While the parameter values for the present work are focused on cervical cancer, our work is applicable to a wider range of cancers that are treated using CRT. In particular, we are very eager to investigate the application of our model to additional cancers in order to find the optimal CRT regimens and study how they may vary across different cancers. Furthermore, our model is based on the assumption that the total population of the primary tumor can be accurately approximated with a deterministic function. However, the nature of tumor response to chemotherapeutic agents and ionizing radiation is stochastic and varies across different patients. Therefore, a potentially interesting extension of this work could be modeling the primary tumor response to CRT as a stochastic process.

{\color{black}While the decision problem modeled and solved in this work focuses on cancer treatment, it can be classified as belonging to a broader category of decision problems related to the optimal control of spatiotemporal spread of threats. Other forms of threats include infectious diseases, invasive spices, and Internet security threats. The common goal is to find the optimal combination and timing of intervention strategies that minimize the threat dispersal, while ensuring that budget (resource) limitations are met. The intervention strategies may involve, among others, slowing down the dispersion rate, directly targeting localized outbreaks, and surveillance. The modeling and solution approach developed in this study can be, in principle, extended and applied to other decision problems in this category.}

\section{Acknowledgments}
We would like to thank Dr. Emil Lou and Dr. Jianling Yuan for their valuable feedback on this manuscript. In particular, they gave us very helpful comments on the current clinical practice in the treatment of cervical cancer.
\clearpage
 \bibliographystyle{plain}
 \bibliography{Refs}

\clearpage
\section*{Appendices}
\renewcommand{\thesubsection}{\Alph{subsection}}

\subsection{Proof of Lemma \ref{lemmasum}} \label{lemmasumProof}
\begin{proof}
We use contradiction to prove our result. Let the optimal dose vector be $\vec{c'}=\{c_1',\dots,c_N'\}$ such that $\sum_{t=1}^{N}c_t'<C_{\max}$. Then, we can always construct a feasible solution $\vec{c''}$ based on $\vec{c'}$ that contradicts the optimality of $\vec{c'}$. For example, first we set $\vec{c''}=\vec{c'}$, and then for an arbitrary integer $i$, $1\le i\le N$, such that $c_i'<c_{\max}$ (note that this integer must exist since we have $C_{\max}<Nc_{\max}$), we set $c_i''=\min\{c_{\max},c_i'+C_{\max}-\sum_{t=1}^{N}c_t'\}$ and observe that {\color{black}
$$g\left( \sum_{t=1}^{N}d_t,\sum_{t=1}^{N}d_t^2,\sum_{t=1}^{N}c_t'',\sum_{t=1}^{N}d_tc_t''\right)\le g\left( \sum_{t=1}^{N}d_t,\sum_{t=1}^{N}d_t^2,\sum_{t=1}^{N}c_t',\sum_{t=1}^{N}d_tc_t'\right)\text{ and }f(\vec{d},\vec{c''})\le f(\vec{d},\vec{c'})$$}
since $\alpha$, $\beta$, $\theta$, and {\color{black}$\psi$} are always positive values.
\end{proof}
{\color{black}
\subsection{DP algorithm for the case of static radio-sensitivity parameters} \label{algorithm}

\begin{algorithm}
\caption{DP algorithm to solve problem \eqref{eq-obj1}--\eqref{eq-checon1} with static $\alpha$ and $\beta$}
\begin{algorithmic}[1]\label{alg1}
  \scriptsize
  \STATE {\color{black}Create a data table with headers: $T$=\{$day, U,V,S,W,d,c,obj,id,track$\}}
  \STATE {\color{black}$counter\leftarrow 1$}
  \STATE {\color{black}For $i\in \{0:d_{\min}:d_{\max}\}$ do:}
  \STATE {\color{black}$\mbox{ }$ For $j\in \{0:c_{\min}:c_{\max}\}$ do:}
  \STATE {\color{black}$\mbox{ }\quad$ Create a temporary data table with headers: $T_{\text{temp}}=\{day, U,V,S,W,d,c,obj,id,track\}$}
  \STATE {\color{black}$\mbox{ }\quad$ $T_{\text{temp}}.day\leftarrow 1,T_{\text{temp}}.U\leftarrow i, T_{\text{temp}}.V\leftarrow i^2, T.S\leftarrow j, T_{\text{temp}}.W\leftarrow i\times j, T_{\text{temp}}.d\leftarrow i, T_{\text{temp}}.c\leftarrow j$, $T_{\text{temp}}.id\leftarrow counter$, $T_{\text{temp}}.track\leftarrow 0$, and
  $$T_{\text{temp}}.obj\leftarrow X_0^\xi \sum_{i=1}^{n}p_ie^{\mu_iT-\omega_iC_{\max}}\left(1+e^{-\mu_i}+ e^{-\xi\left( \alpha i+\beta i^2+(\theta-\omega_i/\xi) j+\psi ij-\frac{\ln 2}{\tau_d}(2-T_k)^+\right)-2\mu_i}\right)  $$}
  \STATE {\color{black}$\mbox{ }\quad$ $counter\leftarrow counter+1$}
  \STATE {\color{black}$\mbox{ }\quad$ $concatenate(T,T_{\text{temp}})$}
  \STATE {\color{black}For $t\in \{2:1:N\}$ do:}
  \STATE {\color{black}$\mbox{ }$ Set $T'=\{\text{all rows in table } T|T.day=t-1\}$}
  \STATE {\color{black}$\mbox{ }$ For $k\in\{1:1:\text{height}(T')\}$ do:}
  \STATE {\color{black}$\mbox{ }\quad$ For $i\in \{0:d_{\min}:d_{\max}\}$ do:}
  \STATE {\color{black}$\mbox{ }\quad\quad$ For $j\in \{0:c_{\min}:c_{\max}\}$ do:}
  \STATE {\color{black}$\mbox{ }\quad\quad\quad$ Create a temporary data table with headers: $T_{\text{temp}}=\{day, U,V,S,W,d,c,obj,id,track\}$}
  \STATE {\color{black}$\mbox{ }\quad\quad\quad$ Set 
  $$
  T_{\text{temp}}.day\leftarrow t, T_{\text{temp}}.U\leftarrow T'.U(k)+i, T_{\text{temp}}.V\leftarrow T'.V(k)+i^2, T_{\text{temp}}.S\leftarrow T'.S(k)+j, T_{\text{temp}}.W\leftarrow T'.W(k)+i\times j, 
  $$ 
  $$
 T_{\text{temp}}.d\leftarrow i, T_{\text{temp}}.c\leftarrow j,
  \Psi_{t,i}=\mu_i\cdot(T-t)-\omega_iC_{\max}-\xi\times( \alpha T_{\text{temp}}.{U}+\beta T_{\text{temp}}.{V}+(\theta-\omega_i/\xi) T_{\text{temp}}.S+\psi T_{\text{temp}}.W-\frac{\ln 2}{\tau_d}(t-T_k)^+)
  $$
  $$ 
  T_{\text{temp}}.obj\leftarrow T'.obj(k)+\begin{cases} X_0^\xi \sum_{i=1}^{n}p_ie^{\Psi_{t,i}} & t\le N-1 \\
  X_0^\xi \sum_{i=1}^{n}p_ie^{\Psi_{t,i}}+g(T_{\text{temp}}.\hat{U},T_{\text{temp}}.\hat{V},T_{\text{temp}}.S,T_{\text{temp}}.W) & t=N \end{cases}
  $$ }
  \STATE {\color{black}$\mbox{ }\quad\quad\quad$ If (all feasibility constraints satisfied):}
  \STATE {\color{black}$\mbox{ }\quad\quad\quad\quad$ Set {$index=\{T.id|T.day=t ,T.U=T_{\text{temp}}.U,T.V=T_{\text{temp}}.V,T.S=T_{\text{temp}}.S,T.W=T_{\text{temp}}.W\}$}}
  \STATE {\color{black}$\mbox{ }\quad\quad\quad\quad$ If ($index==NULL$):} 
  \STATE {\color{black}$\mbox{ }\quad\quad\quad\quad\quad$ $T_{\text{temp}}.id\leftarrow counter $}
  \STATE {\color{black}$\mbox{ }\quad\quad\quad\quad\quad$ $T_{\text{temp}}.track\leftarrow T'.id(k) $}
  \STATE {\color{black}$\mbox{ }\quad\quad\quad\quad\quad$ $concatenate(T,T_{\text{temp}})$}
  \STATE {\color{black}$\mbox{ }\quad\quad\quad\quad\quad$ $counter\leftarrow counter+1$}
  \STATE {\color{black}$\mbox{ }\quad\quad\quad\quad$ Elseif ($ T_{\text{temp}}.obj<T.obj(T.id==index)$):} 
  \STATE {\color{black}$\mbox{ }\quad\quad\quad\quad\quad\quad$ $T.d(T.id==index)\leftarrow T_{\text{temp}}.d$}
  \STATE {\color{black}$\mbox{ }\quad\quad\quad\quad\quad\quad$ $T.c(T.id==index)\leftarrow T_{\text{temp}}.c$}
  \STATE {\color{black}$\mbox{ }\quad\quad\quad\quad\quad\quad$ $T.obj(T.id==index)\leftarrow T_{\text{temp}}.obj$}
  \STATE {\color{black}$\mbox{ }\quad\quad\quad\quad\quad\quad$ $T.track(T.id==index)\leftarrow T'.id(k)$}
  \STATE {\color{black}$id^*\leftarrow\left\lbrace  T.id|T.obj=\min\{T.obj|T.day=N\} \right\rbrace $}
  \STATE {\color{black}Return $d^*_N\leftarrow T.d(T.id==id^*)$, $c^*_N\leftarrow T.c(T.id==id^*)$}
  \STATE {\color{black}For $t\in \{N-1:-1:1\}$ do:}
  \STATE {\color{black}$\mbox{ }$ $id^*\leftarrow T.track(T.id==id^*)$}
  \STATE {\color{black}$\mbox{ }$ Return $d^*_{t}\leftarrow T.d(T.id==id^*)$, $c^*_t\leftarrow T.c(T.id==id^*)$}
\end{algorithmic}
\end{algorithm}

The DP algorithm employs a dynamic table to only store unique and feasible states at each stage. To this end, all discretized states are enumerated and only those that have unique attribute combinations (i.e., $T.U, T.V, T.S, T.W, T.\hat{U},$ and $T.\hat{V}$) are stored.  The pseudo code is provided in Algorithm 1. $d_{min}$ and $c_{min}$ represent the discretization step length for radiation and chemotherapy dose fractions, respectively.}

\subsection{Proof of Lemma \ref{lemmaordering}}\label{lemmaorderingProof}
\begin{proof}
We use contradiction to prove our result. Assume that there exists an optimal dose vector $\vec{d^*}=\{d_1^*,\dots,d_i^*,\dots,d_j^*,\dots,d_{N}^*\}$ such that for some $j>i$, we have $d_j^*>d_i^*$. Now we construct a new feasible solution $\vec{d'}=\{d_1',\dots,d_{N}'\}$, where $d_i'=d_j^*$, $d_j'=d_i^*$ and $d_k'=d_k^*$ for $k\not=i,j$ with the identical drug vector $\vec{c}$. {\color{black}Since we have $\psi=0$,} the feasibility constraints and the value of function $g$ in \eqref{eq-obj1} are order independent, i.e., if we change the order of the dose vector, then the resulting dose vector is still feasible with the same value for function $g$; hence, $\vec{d'}$ is a feasible solution to \eqref{eq-obj1}--\eqref{eq-checon1} where {\color{black}
$$
g\left(\sum_{t=1}^{N}d_t',\sum_{t=1}^{N}(d_t')^2,\sum_{t=1}^{N}c_t,\sum_{t=1}^{N}d_t'c_t \right)=g\left(\sum_{t=1}^{N}d_t^*,\sum_{t=1}^{N}(d_t^*)^2,\sum_{t=1}^{N}c_t,\sum_{t=1}^{N}d_t^*c_t \right).
$$ }
Thus, to prove the result, it is sufficient to show that $f(\vec{d'},\vec{c})\le f(\vec{d^*},\vec{c})$, i.e., \small
$$ \sum_{t=0}^{N+1}e^{-\xi\left( \sum_{j=1}^{t-1}(\alpha d_j'+\beta (d_j')^2+(\theta-\omega_i/\xi) c_j)-\frac{\ln 2}{\tau_d}(t-T_k)^+\right)-\mu_it-\omega_i C_{\max}}\le  \sum_{t=0}^{N+1}e^{-\xi\left( \sum_{j=1}^{t-1}(\alpha d_j^*+\beta (d_j^*)^2+(\theta-\omega_i/\xi) c_j)-\frac{\ln 2}{\tau_d}(t-T_k)^+\right)-\mu_it-\omega_i C_{\max}}$$\normalsize
for all $i=1,\dots,n$. The above inequality is implied by
$$\alpha\sum_{j=1}^{t-1}d_j'+\beta\sum_{j=1}^{t-1}(d_j')^2\ge\alpha\sum_{j=1}^{t-1}d_j^*+\beta\sum_{j=1}^{t-1}(d_j^*)^2 \text{ for all } t=1,\dots,N+1 \text{ and } i=1,\dots,n$$
We can use the same approach to prove the same result for $\vec{c^*}$ (see proof of Theorem \ref{theoremchemo} for more details).
\end{proof}

\subsection{Proof of Theorem \ref{theoremchemo}}\label{theoremchemoProof}
\begin{proof}
We first prove our results when $\theta\xi\ge\max_{i=1,\dots,n} \omega_i$. This condition implies that $\theta-\omega_i/\xi\ge0$ for all $i=1,\dots,n$. First, we use contradiction to show that, in optimality, we must have $c_1^*=\dots=c_k^*=c_{\max}$ for $k=\lfloor C_{\max}/c_{\max}\rfloor$. Assume that there exists an optimal dose vector $\vec{c^*}=\{c_1^*,\dots,c_i^*,\dots,c_j^*,\dots,c_{N}^*\}$ such that for some $j>i$, we have $c_i^*<c_{\max}$ and $c_j^*>0$. Now we construct a new feasible solution $\vec{c'}=\{c_1',\dots,c_{N}'\}$, where $c_i'=\min\{c_{\max},c_i^*+c_j^*\}$, $c_j'=c_j^*-(c_i'-c_i^*)$ and $c_k'=c_k^*$ for $ k\not=i,j$ with the identical dose vector $\vec{d}$. First, note that the drug vector $\vec{c'}$ is a feasible solution, since $\sum_{t=1}^{N}c_t'=\sum_{t=1}^{N}c_t^*$ and $c_t'\le c_{\max}$ for $t=1,\dots,N$. Second, the value of function $g$ in \eqref{eq-obj1} has the same value for both $\vec{c^*}$ and $\vec{c'}$, since $\sum_{t=1}^{N}c_t^*=\sum_{t=1}^{N}c_t'$ {\color{black}and $\psi=0$, i.e.,
$$
g\left(\sum_{t=1}^{N}d_t,\sum_{t=1}^{N}d_t^2,\sum_{t=1}^{N}c_t^*,\sum_{t=1}^{N}d_tc_t^* \right)=g\left(\sum_{t=1}^{N}d_t,\sum_{t=1}^{N}d_t^2,\sum_{t=1}^{N}c_t',\sum_{t=1}^{N}d_tc_t' \right).
$$  }
Thus, to prove that $\vec{c'}$ results in a smaller objective function, it is sufficient to show that $f(\vec{d},\vec{c'})\le f(\vec{d},\vec{c^*})$, i.e.,
\begin{align*} 
&\sum_{t=0}^{N+1}e^{-\xi\left( \sum_{j=1}^{t-1}(\alpha d_j+\beta d_j^2+(\theta-\omega_i/\xi) c_j')-\frac{\ln 2}{\tau_d}(t-T_k)^+\right)-\mu_it-\omega_i C_{\max}}\le \\ &  \sum_{t=0}^{N+1}e^{-\xi\left( \sum_{j=1}^{t-1}(\alpha d_j+\beta d_j^2+(\theta-\omega_i/\xi) c_j^*)-\frac{\ln 2}{\tau_d}(t-T_k)^+\right)-\mu_it-\omega_i C_{\max}}
\end{align*}
for all $i=1,\dots,n$. The above inequality can be shown if and only if we show the following inequality for $ t=0,\dots,N+1$
\begin{align*}
&-\xi\left( \sum_{j=1}^{t-1}(\alpha d_j+\beta d_j^2+(\theta-\omega_i/\xi) c_j')-\frac{\ln 2}{\tau_d}(t-T_k)^+\right)-\mu_it-\omega_i C_{\max}\le \\ & -\xi\left( \sum_{j=1}^{t-1}(\alpha d_j+\beta d_j^2+(\theta-\omega_i/\xi) c_j^*)-\frac{\ln 2}{\tau_d}(t-T_k)^+\right)-\mu_it-\omega_i C_{\max}
\end{align*}
or equivalently
$$ 
\sum_{k=1}^{t-1}(\alpha d_k+\beta d_k^2+(\theta-\omega_i/\xi) c_k')\ge   \sum_{k=1}^{t-1}(\alpha d_k+\beta d_k^2+(\theta-\omega_i/\xi) c_k^*),\ \ i=1,\dots,n
$$
Since both schedules have an identical radiation dose vector, we need to show that
$$ 
(\theta-\omega_i/\xi)\sum_{k=1}^{t-1} c_k'\ge (\theta-\omega_i/\xi)\sum_{k=1}^{t-1} c_k^* \ \ t=1,\dots,N+1 \text{ and } \ \ i=1,\dots,n.
$$
We assume that  $\theta-\omega_i/\xi\ge0$ for all $i=1,\dots,n$; therefore, the above inequality is implied by 
$$\sum_{k=1}^{t-1} c_k'\ge \sum_{k=1}^{t-1} c_k^*\text{ for all } t=1,\dots,N+1.$$
Note that $c_i'=\min\{c_{\max},c_i^*+c_j^*\}$, $c_j'=c_j^*-(c_i'-c_i^*)$ and $c_k'=c_k^*$ for $j>i$ and $ k\notin i,j$; therefore, we have $c_i'>c_i^*$, which implies that $\sum_{t=1}^{l}c_t'>\sum_{t=1}^{l}c_t^*$ for $ l: i\le l< j$ and $\sum_{t=1}^{l}c_t'=\sum_{t=1}^{l}c_t^*$ for $ l\ge j$ and $l< i$.  

Next, note that as a result of Lemma \ref{lemmasum}, we know that $\sum_{t=1}^{N}c_t^*=C_{\max}$. Hence, we can repeat the above procedure to improve the optimal solution until we get $c_1^*=\dots=c_k^*=c_{\max}$ for $k=\lfloor C_{\max}/c_{\max}\rfloor$, where no more improvement can be achieved. At this step, we have two scenarios: first, $\lfloor C_{\max}/c_{\max}\rfloor\in \mathbb{N}$, where we must have $c^*_{k+1}=\dots=c^*_{N}=0$, which proves our result; second, $\lfloor C_{\max}/c_{\max}\rfloor\not = C_{\max}/c_{\max}$, where $\sum_{t=k+1}^{N}c_t=C_{\max}-kc_{\max}<c_{\max}$. In this case, by using the same contradiction as before, we can show that if we choose $c_{k+1}<C_{\max}-kc_{\max}$, then we can always construct a feasible solution with a smaller objective function.

We can use a similar approach when $\theta\xi<\min_{i=1,\dots,n} \omega_i$. In this case, we have $\theta-\omega_i/\xi<0,\ \  i=1,\dots,n$. Therefore, for any schedule with $c_i^*<c_{\max}$ and $c_j^*>0$ for $i>j$, we can construct a new feasible solution $\vec{c'}=\{c_1',\dots,c_{N}'\}$, where $c_i'=\min\{c_{\max},c_i^*+c_j^*\}>c_i^*$, $c_j'=c_j^*-(c_i'-c_i^*)<c_j^*$ and $c_k'=c_k^*$ for $ k\not=i,j$ with a smaller objective function. Similar to the previous case, here we require that\small
$$ \sum_{k=1}^{t-1}(\alpha d_k+\beta d_k^2+(\theta-\omega_i/\xi) c_k')\ge   \sum_{k=1}^{t-1}(\alpha d_k+\beta d_k^2+(\theta-\omega_i/\xi) c_k^*),\ \  i=1,\dots,n$$\normalsize
since $\theta-\omega_i/\xi<0\ \  i,$ which equivalently shows that
$$\sum_{k=1}^{t-1} c_k'\le \sum_{k=1}^{t-1} c_k^*\text{ for all } t=1,\dots,N+1.$$
This inequality is implied by $\sum_{t=1}^{l}c_t'<\sum_{t=1}^{l}c_t^*$ for $ l: j\le l< i$ and $\sum_{t=1}^{l}c_t'=\sum_{t=1}^{l}c_t^*$ for $ l\ge i$ and $ l< j$. The rest is similar to the situation where $\theta\xi>\max_{i=1,\dots,n} \omega_i$. 
\end{proof}
{\color{black}
\subsection{DP algorithm for the case of dynamic radio-sensitivity parameters} \label{algorithm-dynamic}
The structure of the 6D DP algorithm is similar to the 4D case. However, we need to store information related to two additional state dimensions, which are $\{\hat{U}_t,\hat{V}_t\}$, leading to a larger number of rows and columns in our DP table. Hence, the 6D DP algorithm has a higher computational and space complexity compared to the 4D case. To ease this additional burden, we derive and use the following structural results on the optimal solution to significantly reduce the height of the DP table by only storing information related to promising states and skipping those that lead to sub-optimal solutions. 
\begin{lemma}\label{lemma-structural}
Consider two arbitrary states at time stage $t$, $\mathcal{B}_{1,t}=\{u_1,v_1,s_1,w_1,\hat{u}_1,\hat{v}_1\}$ and $\mathcal{B}_{2,t}=\{u_2,v_2,s_2,w_2,\hat{u}_2,\hat{v}_2\}$ where $\hat{J}(\mathcal{B}_{2,t})\le\hat{J}(\mathcal{B}_{1,t})$. Then we can discard the row associated with state $\mathcal{B}_{1,t}$ in our DP table if $u_2\le u_1$, $v_2\le v_1$, $s_1=s_2$, $w_2\ge w_1$, $\hat{u}_2\ge \hat{u}_1$ and $\hat{v}_2\ge \hat{v}_1$.
\end{lemma}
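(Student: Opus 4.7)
The plan is to use a direct coupling argument: for any feasible continuation $(d_{t+1},c_{t+1}),\ldots,(d_N,c_N)$ from state $\mathcal{B}_{1,t}$, apply the identical controls from state $\mathcal{B}_{2,t}$, and show that (i) the resulting trajectory is feasible and (ii) the added cost-to-go from $\mathcal{B}_{2,t}$ is no larger than the added cost-to-go from $\mathcal{B}_{1,t}$. Combined with the hypothesis $\hat{J}(\mathcal{B}_{2,t})\le \hat{J}(\mathcal{B}_{1,t})$, this shows that every admissible completion routed through $\mathcal{B}_{1,t}$ is matched by a completion through $\mathcal{B}_{2,t}$ of no larger total objective, so $\mathcal{B}_{1,t}$ may be safely removed from the DP table.

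The first task is an induction on $\tau=t+1,\ldots,N$ showing that the full dominance ordering is preserved: $U_\tau^{(2)}\le U_\tau^{(1)}$, $V_\tau^{(2)}\le V_\tau^{(1)}$, $S_\tau^{(2)}=S_\tau^{(1)}$, $W_\tau^{(2)}\ge W_\tau^{(1)}$, $\hat{U}_\tau^{(2)}\ge \hat{U}_\tau^{(1)}$, and $\hat{V}_\tau^{(2)}\ge \hat{V}_\tau^{(1)}$. The first four coordinates are additive functions of the common controls, so they inherit the orderings from stage $t$ immediately. For the last two, I would use the inductive hypothesis to conclude that the exponent in \eqref{eq-oxpress} is smaller in trajectory 2, yielding $y_\tau^{(2)}\ge y_\tau^{(1)}$, and then the monotonicity of the OER expressions \eqref{eq-OER} in $y_\tau$ gives $\alpha_\tau^{(2)}\ge \alpha_\tau^{(1)}$ and $\beta_\tau^{(2)}\ge \beta_\tau^{(1)}$. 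Hence $\hat{U}_\tau^{(2)}-\hat{U}_\tau^{(1)} = (\hat{u}_2-\hat{u}_1)+\sum_{r=t+1}^{\tau}(\alpha_r^{(2)}-\alpha_r^{(1)})d_r\ge 0$, and analogously for $\hat{V}_\tau$, closing the induction.

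For (i), the OAR BED constraints depend only on $U_\tau$ and $V_\tau$, so the ordering just established keeps them satisfied, and the chemotherapy cap uses $S_\tau$, which is identical in the two trajectories. For the tumor BED constraint \eqref{eq-bedtum}, I would read it in the form appropriate for dynamic radio-sensitivity as a lower bound on the terminal cumulative tumor cell-kill $\hat{U}_N+\hat{V}_N$; the dominance above gives $\hat{U}_N^{(2)}+\hat{V}_N^{(2)}\ge \hat{U}_N^{(1)}+\hat{V}_N^{(1)}$, so the constraint transfers. For (ii), plugging the preserved orderings into $\hat{\Psi}_{\tau,i}$ shows that the $\hat{U}_{\tau-1}$, $\hat{V}_{\tau-1}$, $\alpha_\tau d_\tau$, $\beta_\tau d_\tau^2$, and $\psi(W_{\tau-1}+d_\tau c_\tau)$ contributions are all no smaller in trajectory 2, while the $(\theta-\omega_i/\xi)(S_{\tau-1}+c_\tau)$ contribution is identical because $S$ agrees in the two trajectories; therefore $\hat{\Psi}_{\tau,i}^{(2)}\ge \hat{\Psi}_{\tau,i}^{(1)}$ for every site $i$, and both $\hat{L}$ at each stage and the terminal $\hat{g}$ are no larger in trajectory 2.

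The main technical obstacle is propagating the inequality through the endogenous parameters $\alpha_\tau$ and $\beta_\tau$, which depend on the tumor history and hence differ between the two trajectories even when the controls are identical; the coupling argument therefore rests on the chain of monotonicities $(\hat{U},\hat{V},S,W)\mapsto y_\tau \mapsto (\alpha_\tau,\beta_\tau)$ supplied by \eqref{eq-oxpress} and \eqref{eq-OER}, which must be verified carefully before the cost comparison can be closed.
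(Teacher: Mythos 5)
Your proposal follows essentially the same argument as the paper's proof: couple the two states by applying identical continuation controls, use the monotonicity chain (larger cumulative kill $\hat{U},\hat{V},W$ with equal $S$ $\Rightarrow$ larger oxygen pressure $y_\tau$ via \eqref{eq-oxpress} $\Rightarrow$ larger $\alpha_\tau,\beta_\tau$ via \eqref{eq-OER} since $\mathrm{OER}_\alpha,\mathrm{OER}_\beta\ge 1$) to propagate the dominance ordering forward, and conclude $\hat{\Psi}^{(2)}_{\tau,i}\ge\hat{\Psi}^{(1)}_{\tau,i}$ so each stage cost and the terminal $\hat{g}$ are no larger along the dominating trajectory. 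Your write-up is in fact somewhat more explicit than the paper's (which spells out only the step $k=t+1$, leaves the later stages as ``similar,'' and does not discuss feasibility transfer, including the reading of the tumor-control constraint), but the underlying idea is identical.
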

\begin{proof}
We prove this result by contradiction. Suppose there exists an optimal path initiated from $\mathcal{B}_{1,t}$, then we construct a solution using $\mathcal{B}_{2,t}$ with a smaller objective value. To this end, let the optimal solution associated with $\mathcal{B}_{1,t}$ be $\vec{d}^*=\{d_1^*,\dots,d_t^*,\dots,d_N^*\}$ and $\vec{c}^*=\{c_1^*,\dots,c_t^*,\dots,c_N^*\}$ and the path resulting in $\hat{J}(\mathcal{B}_{2,t})$ be $\{d_1',\dots,d_t'\}$ and $\{c_1',\dots,c_t'\}$. Then we can construct solution $\{d_1',\dots,d_t',d_{t+1}^*,\dots,d_N^*\}$ and $\{c_1',\dots,c_t',c_{t+1}^*,\dots,c_N^*\}$  with a smaller or equal objective value. It suffices to establish that
$$
\hat{J}(\mathcal{B}_{2,k})\le\hat{J}(\mathcal{B}_{1,k}), \forall k=t+1,\dots,N+1
$$
We show the validity of the inequality above for $k=t+1$, and the proof for $k=t+2,\dots,N+1$ is similar. The inequality $\hat{J}(\mathcal{B}_{2,t+1})\le\hat{J}(\mathcal{B}_{1,t+1})$ is equivalent to
$$
 \hat{J}(\mathcal{B}_{2,t})+X_0^\xi \sum_{i=1}^{n}p_i e^{-\xi\times\hat{\Psi}_{2,i,t+1}+\mu_i(T-t-1)-\omega_i C_{\max}}
\le
\hat{J}(\mathcal{B}_{1,t})+X_0^\xi \sum_{i=1}^{n}p_i e^{-\xi\times\hat{\Psi}_{1,i,t+1}+\mu_i(T-t-1)-\omega_i C_{\max}}.
$$\normalsize
Note that the above inequality is implied by 
$$
\hat{J}(\mathcal{B}_{2,t})\le \hat{J}(\mathcal{B}_{1,t}) \text{ and } \hat{\Psi}_{2,i,t+1} \ge \hat{\Psi}_{1,i,t+1} \ \ \forall i.
$$
The inequality $\hat{\Psi}_{2,i,t+1} \ge \hat{\Psi}_{1,i,t+1}$ follows from 
\begin{align}
\label{eq:Psi_Ineq}
&\hat{u}_{2}+\alpha_{2,t+1}d_{t+1}^*+ \hat{v}_{2}+\beta_{2,t+1}(d_{t+1}^*)^2+(\theta-\omega_i/\xi) (s_2+c_{t+1}^*)+\psi(w_{2}+d_{t+1}^*c_{t+1}^*)\ge\\
&\quad \hat{u}_{1}+\alpha_{1,t+1}d_{t+1}^*+ \hat{v}_{1}+\beta_{1,t+1}(d_{t+1}^*)^2+(\theta-\omega_i/\xi) (s_1+c_{t+1}^*)+\psi(w_{1}+d_{t+1}^*c_{t+1}^*).\nonumber
\end{align}
Note that we have $s_1=s_2$, $w_2\ge w_1$, $\hat{u}_2\ge \hat{u}_1$, and $\hat{v}_2\ge \hat{v}_1$, and we can conclude that $y_{2,t}\ge y_{1,t}$. Therefore, to prove inequality \eqref{eq:Psi_Ineq}, it suffices to show
$$
\alpha_{2,t+1}d_{t+1}^*+\beta_{2,t+1}(d_{t+1}^*)^2\ge \alpha_{1,t+1}d_{t+1}^*+\beta_{1,t+1}(d_{t+1}^*)^2.
$$
Since we always have $OER_\alpha\ge1$ and $OER_\beta\ge1$, and the inequality $y_{2,t}\ge y_{1,t}$ holds, we can use \eqref{eq-OER} to complete the proof
$$
\alpha_{2,t+1}\ge \alpha_{1,t+1} \text{ and } \beta_{2,t+1}\ge \beta_{1,t+1}.
$$

\end{proof}

We next use the result of Lemma \ref{lemmasum} to discard another set of sub-optimal states during the state enumerations at each stage.

{\bf{Observation 1}:} Consider an arbitrary state at time stage $t$, $\mathcal{B}_{t}=\{U_t,V_t,S_t,W_t,\hat{U}_t,\hat{V}_t\}$. Then we can discard $\mathcal{B}_{t}$ if $(N-t)c_{\max}<C_{\max}-S_t$.

Using the above results, we use Algorithm \ref{alg2} to solve problem \eqref{eq-obj1}--\eqref{eq-checon1} with dynamic radio-sensitivity parameters $\alpha$ and $\beta$. 

\begin{algorithm}
\caption{DP algorithm to solve problem \eqref{eq-obj1}--\eqref{eq-checon1} with dynamic $\alpha$ and $\beta$}
\begin{algorithmic}[1]\label{alg2}
  \scriptsize
  \STATE {\color{black}Create a data table with headers: $T$=\{$day, U,V,S,W,\hat{U},\hat{V},d,c,obj,id,track$\}}
  \STATE {\color{black}$y_1\leftarrow y_{\max}-\iota\times\sqrt[3]{\frac{3X_0}{4\pi\rho}}, counter\leftarrow 1$ and compute $\alpha_1$ and $\beta_1$ based on \eqref{eq-OER}}
  \STATE {\color{black}For $i\in \{0:d_{\min}:d_{\max}\}$ do:}
  \STATE {\color{black}$\mbox{ }$ For $j\in \{0:c_{\min}:c_{\max}\}$ do:}
  \STATE {\color{black}$\mbox{ }\quad$ Create a temporary data table with headers: $T_{\text{te}}=\{day, U,V,S,W,\hat{U},\hat{V},d,c,obj,id,track\}$}
  \STATE {\color{black}$\mbox{ }\quad$ $T_{\text{te}}.day\leftarrow 1,T_{\text{t}}.U\leftarrow i, T_{\text{te}}.V\leftarrow i^2, T.S\leftarrow j, T_{\text{te}}.W\leftarrow i\times j, T_{\text{te}}.\hat{U}\leftarrow \alpha_1\times i, T_{\text{te}}.\hat{V}\leftarrow \beta_1\times i^2,T_{\text{te}}.d\leftarrow i, T_{\text{te}}.c\leftarrow j$, $T_{\text{te}}.id\leftarrow counter$, $T_{\text{te}}.track\leftarrow 0$, and
  $$T_{\text{te}}.obj\leftarrow X_0^\xi \sum_{i=1}^{n}p_ie^{\mu_iT-\omega_iC_{\max}}\left(1+e^{-\mu_i}+ e^{-\xi\left( \alpha_1 i+\beta_1 i^2+(\theta-\omega_i/\xi) j+\psi ij-\frac{\ln 2}{\tau_d}(2-T_k)^+\right)-2\mu_i}\right)  $$}
  \STATE {\color{black}$\mbox{ }\quad$ $counter\leftarrow counter+1$}
  \STATE {\color{black}$\mbox{ }\quad$ $concatenate(T,T_{\text{te}})$}
  \STATE {\color{black}For $t\in \{2:1:N\}$ do:}
  \STATE {\color{black}$\mbox{ }$ Set $T'=\{\text{all entities in table } T|T.day=t-1\}$}
  \STATE {\color{black}$\mbox{ }$ For $k\in\{1:1:\text{height}(T')\}$ do:}
  \STATE {\color{black}$\mbox{ }\quad$ $y_t\leftarrow y_{\max}-\iota\times \sqrt[3]{\frac{3X_0}{4\pi\rho}}\exp\left(-\frac{1}{3}\left( T'.\hat{U}(k)+T'.\hat{V}(k)+\theta T'.S(k)+\psi T'.W(k)+\frac{\ln 2}{\tau_d}(t-T_k)^+\right) \right)
  $  and compute $\alpha_t$ and $\beta_t$ using equations \eqref{eq-OER}}
  \STATE {\color{black}$\mbox{ }\quad$ For $i\in \{0:d_{\min}:d_{\max}\}$ do:}
  \STATE {\color{black}$\mbox{ }\quad\quad$ For $j\in \{0:c_{\min}:c_{\max}\}$ do:}
  \STATE {\color{black}$\mbox{ }\quad\quad\quad$ Create a temporary data table with headers: $T_{\text{te}}=\{day, U,V,S,W,\hat{U},\hat{V},d,c,obj,id,track\}$}
  \STATE {\color{black}$\mbox{ }\quad\quad\quad$ Set 
  $$
  T_{\text{te}}.day\leftarrow t, T_{\text{te}}.U\leftarrow T'.U(k)+i, T_{\text{te}}.V\leftarrow T'.V(k)+i^2, T_{\text{te}}.S\leftarrow T'.S(k)+j, T_{\text{te}}.W\leftarrow T'.W(k)+i\times j, 
  $$ 
  $$
  T_{\text{te}}.\hat{U}\leftarrow T'.\hat{U}(k)+i\times\alpha_t, T_{\text{te}}.\hat{V}\leftarrow T'.\hat{V}(k)+i^2\times\beta_t, T_{\text{t}}.d\leftarrow i, T_{\text{te}}.c\leftarrow j,
  $$ 
  $$
  \Psi_{i,t}=\mu_i\cdot(T-t)-\omega_iC_{\max}-\xi\times( T_{\text{te}}.\hat{U}+T_{\text{te}}.\hat{V}+(\theta-\omega_i/\xi) T_{\text{te}}.S+\psi T_{\text{te}}.W-\frac{\ln 2}{\tau_d}(t-T_k)^+)
  $$
  $$ 
  T_{\text{te}}.obj\leftarrow T'.obj(k)+\begin{cases} X_0^\xi \sum_{i=1}^{n}p_ie^{\Psi_{i,t}} & t\le N-1 \\
  X_0^\xi \sum_{i=1}^{n}p_ie^{\Psi_{i,t}}+\hat{g}(T_{\text{te}}.\hat{U},T_{\text{te}}.\hat{V},T_{\text{te}}.S,T_{\text{te}}.W) & t=N \end{cases}
  $$ }
  \STATE {\color{black}$\mbox{ }\quad\quad\quad$ If (all feasibility constraints satisfied and $(N-t)c_{\max}\ge C_{\max}-T_{\text{te}.S}$):}
  \STATE {\color{black}$\mbox{ }\quad\quad\quad\quad$ Set 
  $$
  \vec{index}=\{\forall \ \ T.id|T.day=t,T_{\text{te}}.U\le T.U,T_{\text{te}}.V\le T.V,T_{\text{te}}.S=T.S,T_{\text{te}}.W\ge T.W,T_{\text{te}}.\hat{U}\ge T.\hat{U} ,T_{\text{te}}.\hat{V}\ge T.\hat{V}\}
  $$}
  \STATE {\color{black}$\mbox{ }\quad\quad\quad\quad$ If ($\vec{index}==NULL$):} 
  \STATE {\color{black}$\mbox{ }\quad\quad\quad\quad\quad$ $T_{\text{te}}.id\leftarrow counter $}
  \STATE {\color{black}$\mbox{ }\quad\quad\quad\quad\quad$ $T_{\text{te}}.track\leftarrow T'.id(k) $}
  \STATE {\color{black}$\mbox{ }\quad\quad\quad\quad\quad$ $concatenate(T,T_{\text{te}})$}
  \STATE {\color{black}$\mbox{ }\quad\quad\quad\quad\quad$ $counter\leftarrow counter+1$}
  \STATE {\color{black}$\mbox{ }\quad\quad\quad\quad$ Else:}
  \STATE {\color{black}$\mbox{ }\quad\quad\quad\quad\quad$ For $k\in\vec{index}$}
  \STATE {\color{black}$\mbox{ }\quad\quad\quad\quad\quad\quad$ If ($T_{\text{te}}.obj\le T.obj(T.id==index(k))$): }
  \STATE {\color{black}$\mbox{ }\quad\quad\quad\quad\quad\quad\quad$ Remove row with ID $index(k)$ from table T}
  \STATE {\color{black}$\mbox{ }\quad\quad\quad\quad\quad\quad\quad$ $check\leftarrow 1$}
  \STATE {\color{black}$\mbox{ }\quad\quad\quad\quad\quad$ If check=1}
  \STATE {\color{black}$\mbox{ }\quad\quad\quad\quad\quad\quad$ $T_{\text{te}}.id\leftarrow counter $}

  \STATE {\color{black}$\mbox{ }\quad\quad\quad\quad\quad\quad$ $T_{\text{te}}.track\leftarrow T'.id(k) $}
  \STATE {\color{black}$\mbox{ }\quad\quad\quad\quad\quad\quad$ $concatenate(T,T_{\text{te}})$}
  \STATE {\color{black}$\mbox{ }\quad\quad\quad\quad\quad\quad$ $counter\leftarrow counter+1$}
  \STATE {\color{black}$id^*\leftarrow\left\lbrace  T.id|T.obj=\min\{T.obj|T.day=N\} \right\rbrace $}
  \STATE {\color{black}Return $d^*_N\leftarrow T.d(T.id==id^*)$, $c^*_N\leftarrow T.c(T.id==id^*)$}
  \STATE {\color{black}For $t\in \{N-1:-1:1\}$ do:}
  \STATE {\color{black}$\mbox{ }$ $id^*\leftarrow T.track(T.id==id^*)$}
  \STATE {\color{black}$\mbox{ }$ Return $d^*_{t}\leftarrow T.d(T.id==id^*)$, $c^*_t\leftarrow T.c(T.id==id^*)$}
\end{algorithmic}
\end{algorithm}
}
 
\end{document}